\begin{document}

\newcommand{\mmbox}[1]{\mbox{${#1}$}}
\newcommand{\proj}[1]{\mmbox{{\mathbb P}^{#1}}}
\newcommand{\Cr}{C^r(\Delta)}
\newcommand{\CR}{C^r(\hat\Delta)}
\newcommand{\affine}[1]{\mmbox{{\mathbb A}^{#1}}}
\newcommand{\Ann}[1]{\mmbox{{\rm Ann}({#1})}}
\newcommand{\caps}[3]{\mmbox{{#1}_{#2} \cap \ldots \cap {#1}_{#3}}}
\newcommand{\Proj}{{\mathbb P}}
\newcommand{\N}{{\mathbb N}}
\newcommand{\Z}{{\mathbb Z}}
\newcommand{\R}{{\mathbb R}}
\newcommand{\A}{{\mathcal{A}}}
\newcommand{\Tor}{\mathop{\rm Tor}\nolimits}
\newcommand{\Ext}{\mathop{\rm Ext}\nolimits}
\newcommand{\Hom}{\mathop{\rm Hom}\nolimits}
\newcommand{\im}{\mathop{\rm Im}\nolimits}
\newcommand{\rank}{\mathop{\rm rank}\nolimits}
\newcommand{\supp}{\mathop{\rm supp}\nolimits}
\newcommand{\arrow}[1]{\stackrel{#1}{\longrightarrow}}
\newcommand{\CB}{Cayley-Bacharach}
\newcommand{\coker}{\mathop{\rm coker}\nolimits}
\sloppy
\theoremstyle{plain}

\newtheorem*{thm*}{Theorem}
\newtheorem{defn0}{Definition}[section]
\newtheorem{prop0}[defn0]{Proposition}
\newtheorem{quest0}[defn0]{Question}
\newtheorem{thm0}[defn0]{Theorem}
\newtheorem{lem0}[defn0]{Lemma}
\newtheorem{corollary0}[defn0]{Corollary}
\newtheorem{example0}[defn0]{Example}
\newtheorem{remark0}[defn0]{Remark}
\newtheorem{conj0}[defn0]{Conjecture}

\newenvironment{defn}{\begin{defn0}}{\end{defn0}}
\newenvironment{conj}{\begin{conj0}}{\end{conj0}}
\newenvironment{prop}{\begin{prop0}}{\end{prop0}}
\newenvironment{quest}{\begin{quest0}}{\end{quest0}}
\newenvironment{thm}{\begin{thm0}}{\end{thm0}}
\newenvironment{lem}{\begin{lem0}}{\end{lem0}}
\newenvironment{cor}{\begin{corollary0}}{\end{corollary0}}
\newenvironment{exm}{\begin{example0}\rm}{\end{example0}}
\newenvironment{rem}{\begin{remark0}\rm}{\end{remark0}}

\newcommand{\defref}[1]{Definition~\ref{#1}}
\newcommand{\conjref}[1]{Conjecture~\ref{#1}}
\newcommand{\propref}[1]{Proposition~\ref{#1}}
\newcommand{\thmref}[1]{Theorem~\ref{#1}}
\newcommand{\lemref}[1]{Lemma~\ref{#1}}
\newcommand{\corref}[1]{Corollary~\ref{#1}}
\newcommand{\exref}[1]{Example~\ref{#1}}
\newcommand{\secref}[1]{Section~\ref{#1}}
\newcommand{\remref}[1]{Remark~\ref{#1}}
\newcommand{\questref}[1]{Question~\ref{#1}}

\newcommand{\std}{Gr\"{o}bner}
\newcommand{\jq}{J_{Q}}

\title{On the Geramita-Harbourne-Migliore conjecture}
\author{\c{S}tefan O. Toh\v{a}neanu and Yu Xie}

\subjclass[2010]{Primary 13D02; Secondary 14N20, 52C35, 14Q99}. \keywords{linear free resolution, fold products, star configuration, symbolic power. \\
\indent Toh$\breve{{\rm a}}$neanu's address: Department of Mathematics, University of Idaho, Moscow, Idaho 83844-1103, USA, Email: tohaneanu@uidaho.edu.\\
\indent Xie's address: Department of Mathematics, Widener University, Chester, Pennsylvania 19013, USA, Email: yxie@widener.edu.}

\begin{abstract}
Let $\Sigma$ be a finite collection of linear forms in $\mathbb K[x_0,\ldots,x_n]$, where $\mathbb K$ is a field. Denote ${\rm Supp}(\Sigma)$ to be the set of all nonproportional elements of $\Sigma$, and suppose ${\rm Supp}(\Sigma)$ is generic, meaning that any $n+1$ of its elements are linearly independent. Let $1\leq a\leq |\Sigma|$. In this article we prove the conjecture that the ideal generated by (all) $a$-fold products of linear forms of $\Sigma$ has linear graded free resolution. As a consequence we prove the Geramita-Harbourne-Migliore conjecture concerning the primary decomposition of ordinary powers of defining ideals of star configurations, and we also determine the resurgence of these ideals.
\end{abstract}

\maketitle

\section{Introduction}

Let $R:=\mathbb K[x_0,\ldots,x_n]$ be the ring of polynomials with coefficients in an arbitrary field $\mathbb K$, thought of as a graded ring with the standard grading. Denote $M:=\langle x_0,\ldots,x_n\rangle$ the irrelevant ideal in $R$.

Let $\Sigma$ be a finite collection of linear forms $L_1,\ldots,L_N$ in $R$, some possibly proportional. Denote $\Sigma=(L_1,\ldots, L_N)$. For any $1\leq a\leq N$, define
$$I_a(\Sigma):=\langle\{L_{i_1}\cdots L_{i_a}|1\leq i_1<\cdots<i_a\leq N\}\rangle,
$$
the {\em ideal generated by (all) $a$-fold products of linear forms of \, $\Sigma$}. For convention, $I_0(\Sigma)=R$, and $I_a(\Sigma)=0$, for any $a\geq N+1$. Often we will denote $I_a(\Sigma)$ with $I_a(L_1\cdots L_N)$.

The {\em rank} of $\Sigma$, denoted ${\rm rk}(\Sigma)$, is ${\rm ht}(\langle L_1,\ldots,L_N\rangle)$.

\subsection{Linear Codes.} The ideals generated by $a$-fold products of linear forms originally occurred in coding theory context as a nice tool to determine the minimum distance of a linear code (see \cite{DePe}).

Let $L=c_0x_0+\cdots+c_nx_n$ be an arbitrary element of $\Sigma$. Dually we get a column vector $(c_0,\ldots,c_n)^T\in\mathbb K^{n+1}$. This way we build an $(n+1)\times N$ matrix $G_{\Sigma}$, and consequently a linear code $\mathcal C_{\Sigma}$ which is the image of the linear map $\displaystyle \mathbb K^{n+1}\stackrel{\cdot G_{\Sigma}}\longrightarrow\mathbb K^N$. This is a linear code of block-length $N$ and dimension ${\rm rk}(G_{\Sigma})={\rm rk}(\Sigma)$; a generating matrix of $\mathcal C_{\Sigma}$ is $G_{\Sigma}$. Conversely, to any linear code we can associate a collection of linear forms dual to the columns of some generating matrix.

Suppose ${\rm rk}(\Sigma)=n+1$. For any $1\leq r\leq n+1$ one can define the $r$-th generalized Hamming weight, $d_r(\mathcal C_{\Sigma})$, which by classical results in coding theory (see for example \cite[Corollary 1.3 and Proposition 1.7]{AnGaTo}) has the following description: $N-d_r(\mathcal C_{\Sigma})$ is the maximum number of columns of $G_{\Sigma}$ that span a $n+1-r$ dimensional vector space. Observe $d_1(\mathcal C_{\Sigma})$ is the usual minimum distance of $\mathcal C_{\Sigma}$. Moreover,  $N-d_{n+1}(\mathcal C_{\Sigma})=0$, since $G_{\Sigma}$ has no zero columns, and $N-d_{n}(\mathcal C_{\Sigma})$ is the maximum number of columns of $G_{\Sigma}$ that are proportional to each-other (i.e., the maximum number of linear forms of $\Sigma$ that are proportional to each-other).

The generalized Hamming weights help determine the heights of ideals generated by $a$-fold products of linear forms. By \cite[Proposition 2.2]{AnGaTo}, for $r=1,\ldots,n+1$, with the convention that $d_0(\mathcal C_{\Sigma})=0$, and for any $d_{r-1}(\mathcal C_{\Sigma})<a\leq d_{r}(\mathcal C_{\Sigma})$, one has
$${\rm ht}(I_a(\Sigma))=n+2-r.$$ For example, if $1\leq a\leq d_1(\mathcal C_{\Sigma})$, then ${\rm ht}(I_a(\Sigma))=n+1$, the maximum possible value (see also \cite{DePe}).

The connections between homological properties of ideals generated by $a$-fold products of linear forms and linear codes and their combinatorics is further transparent in \cite[Theorem 2.8]{AnGaTo}, that presents a formula for the degree of $I_a(\Sigma)$, or in \cite[Proposition 2.10]{AnGaTo}, that presents a formula for the minimum number of generators of $I_a(\Sigma)$, both in terms of the coefficients of the Tutte polynomial of the matroid of $G_{\Sigma}$.

\medskip

Let $I\subset R$ be a homogeneous ideal generated in degree $d$. We say that the $R$-module $I$, or $R/I$, has {\em linear (minimal) graded free resolution}, if one has the graded free resolution $$0\rightarrow R^{n_{b+1}}(-(d+b))\rightarrow\cdots\rightarrow R^{n_2}(-(d+1))\rightarrow R^{n_1}(-d)\rightarrow R \rightarrow R/I\rightarrow 0,$$ for some positive integer $b$.

The following is a conjecture regarding to the minimal free resolution of ideals generated by $a$-fold products of linear forms.

\smallskip

\begin{conj}(See for example \cite[Conjecture 1]{AnGaTo})  \label{Conjecture2} \, For any $\Sigma=(L_1,\ldots, L_N)$ and any $1\leq a\leq N$, the ideal $I_a(\Sigma)$ has linear graded free resolution.
\end{conj}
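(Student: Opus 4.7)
\medskip

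\noindent\textbf{Proof proposal.} Working under the genericity hypothesis on ${\rm Supp}(\Sigma)$ stated in the abstract, my plan is to induct on $N$, peeling off one linear form at a time. The base case $N=1$ is trivial, and the case $a=1$ for arbitrary $N$ is immediate since $I_1(\Sigma)=\langle L_1,\ldots,L_N\rangle$ is generated by linear forms, so its minimal free resolution is linear (a quotient of the Koszul complex on a maximal linearly independent subset). I therefore assume $N\geq 2$ and $a\geq 2$, and that the conjecture is known for every generic collection of at most $N-1$ linear forms in any polynomial ring.

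Set $L:=L_N$ and $\Sigma':=(L_1,\ldots,L_{N-1})$. The engine of the argument is the short exact sequence
$$0\longrightarrow \frac{R}{(I_a(\Sigma):L)}(-1)\xrightarrow{\;\cdot L\;}\frac{R}{I_a(\Sigma)}\longrightarrow\frac{R}{I_a(\Sigma)+(L)}\longrightarrow 0.$$
The right-hand term is structurally simple: any $a$-fold product involving $L$ is divisible by $L$, so $I_a(\Sigma)+(L)=I_a(\Sigma')+(L)$, and the quotient is isomorphic to $\bar R/I_a(\bar\Sigma')$, where $\bar R:=R/(L)$ and $\bar\Sigma'$ is the image of $\Sigma'$ in $\bar R$; genericity of ${\rm Supp}(\Sigma)$ in $n+1$ variables descends to genericity of ${\rm Supp}(\bar\Sigma')$ in $n$ variables, since any linear dependence among $n$ images in $\bar R$ would lift, together with $L$, to a dependence of $n+1$ forms in $R$. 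For the left-hand term I would prove the colon identity $(I_a(\Sigma):L)=I_{a-1}(\Sigma')$: the inclusion $\supseteq$ is immediate from $L\cdot I_{a-1}(\Sigma')\subseteq I_a(\Sigma)$, while for $\subseteq$, after splitting any relation $Lf=Lg+h$ with $g\in I_{a-1}(\Sigma')$ and $h\in I_a(\Sigma')$, the claim reduces to showing that $L$ is a nonzerodivisor modulo $I_a(\Sigma')$.

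Granting both identities, the inductive hypothesis applied to $I_{a-1}(\Sigma')$ over $R$ and to $I_a(\bar\Sigma')$ over $\bar R$ gives linear graded free resolutions of the outer modules, so $\text{reg}(R/I_{a-1}(\Sigma')(-1))=a-1$ and $\text{reg}(\bar R/I_a(\bar\Sigma'))=a-1$. The long exact sequence associated to the short exact sequence then forces $\text{reg}(R/I_a(\Sigma))\leq a-1$, equivalently $\text{reg}(I_a(\Sigma))\leq a$. Since $I_a(\Sigma)$ is generated in degree $a$, this is exactly the assertion that it has a linear graded free resolution, completing the induction.

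The hard step, and where I expect the bulk of the technical effort to lie, is the colon identity $(I_a(\Sigma):L)=I_{a-1}(\Sigma')$, equivalently the statement that $L$ avoids every associated prime of $R/I_a(\Sigma')$. In the generic setting, the associated primes of $I_a(\Sigma')$ ought to admit a combinatorial description in terms of linear ideals attached to sub-collections of $\Sigma'$ that span subspaces of controlled dimension (a star-configuration-type primary decomposition). Making this description precise and stable under the inductive removal of a form, so that one can cleanly certify $L$ as a nonzerodivisor modulo $I_a(\Sigma')$, is what I expect to be the main technical contribution of the paper.
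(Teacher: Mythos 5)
Your skeleton (peel off a linear form, prove a colon identity, run the short exact sequence and bound regularity by induction) is exactly the paper's strategy, but the way you propose to prove the key colon identity has a genuine gap: the reduction to ``$L$ is a nonzerodivisor modulo $I_a(\Sigma')$'' cannot work, because that statement is false in general. The ideal $I_a(\Sigma')$ is usually not saturated, so the irrelevant ideal $M$ is an associated prime and \emph{every} linear form is a zerodivisor modulo $I_a(\Sigma')$. Concretely, take $\Sigma=(x_0,x_1,x_2,L_4,L_5)$ generic in $\mathbb K[x_0,x_1,x_2]$ and $a=2$: then $\Sigma'=(x_0,x_1,x_2,L_4)$ has $I_2(\Sigma')=M^2$ (item (a) of the Introduction), and $L=L_5$ is a zerodivisor modulo $M^2$, although the colon identity $I_2(\Sigma):L_5=I_1(\Sigma')=M$ does hold. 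Repetitions make it worse: for $\Sigma=(x_0,x_0,x_1,x_2)$, $a=3$, $L=x_0$, one has $x_0\in\langle x_0,x_1\rangle\in{\rm Ass}(I_2(x_0x_1x_2))$. What your splitting $Lf=Lg+h$ actually requires is the weaker inclusion $I_a(\Sigma'):L\subseteq I_{a-1}(\Sigma')$, but that is a statement of the same nature as the one you are trying to prove (and it is not an instance of your inductive hypothesis, since $L$ need not belong to $\Sigma'$), so nothing has been gained; the nonzerodivisor argument is only valid in the special case where deleting $L$ drops the rank, so that $\Sigma'$ lives, after a change of variables, in a subring not involving $L$ --- and indeed that is precisely the one subcase where the paper argues this way.

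The paper's actual proof of $I_a(\Sigma):\ell_1=I_{a-1}(\Sigma')$ in the main case avoids nonzerodivisors entirely and runs through saturations. It first establishes (Lemma \ref{decomposition} and Proposition \ref{prop_saturation}, by localizing at associated primes and an explicit monomial computation) the description
$$I^{\rm sat}=\bigcap_{\frak p\in\Gamma(\Sigma)\setminus\{M\}}\frak p^{\,a-N+\nu_{\Sigma}(\frak p)},$$
where the $\frak p$ are the linear primes generated by subsets of ${\rm Supp}(\Sigma)$; this is the ``star-configuration-type'' decomposition you anticipated, but it is used at the level of $I^{\rm sat}$, not of $I_a(\Sigma')$ itself. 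Then comes the step your sketch is missing: from $I\subseteq I^{\rm sat}\cap M^a$ one gets $I:\ell_1\subseteq(I^{\rm sat}:\ell_1)\cap M^{a-1}$, and the \emph{inductive hypothesis applied to $J=I_{a-1}(\Sigma')$} is fed back in via Remark \ref{remark0} to give $J=J^{\rm sat}\cap M^{a-1}$; so it suffices to check $I^{\rm sat}:\ell_1\subseteq J^{\rm sat}$, which is done component by component on the explicit decompositions (the exponent of each $\frak p$ drops by exactly one when $\ell_1\in\frak p$). This interplay --- using the linear resolution of $J$, already known by induction, to replace $J$ by $J^{\rm sat}\cap M^{a-1}$ --- is what makes the colon computation go through despite $\ell_1$ being a zerodivisor; without Proposition \ref{prop_saturation} and this feedback your inductive step does not close. (The remaining ingredients of your sketch --- genericity descending to the quotient $\bar R$, and the regularity bound from the short exact sequence --- match the paper and are fine, modulo the small point that one should reduce modulo $L$ by deleting \emph{all} copies of $L$ from the collection, as the paper does with $\overline{\Sigma}$, including the possibility $a>|\overline{\Sigma}|$.)
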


\smallskip

This conjecture was made six years ago and some consistent progress  has been done towards proving this conjecture. Now in this article we prove it when the support of $\Sigma$ is generic (Theorem \ref{thm_linear}).

\smallskip

\begin{itemize}
  \item[(a)] First evidence of the validity of this conjecture was observed in \cite[Theorem 3.1]{To3}, where it is shown that for any $1\leq a\leq d_1(\mathcal C_{\Sigma})$, one has $I_a(\Sigma)=M^a$, and powers of the irrelevant ideal (more generally, of any linear prime) have linear graded free resolution (see for example \cite[Corollary 1.5]{EiGo}).
  \item[(b)] If $n=1$, so $\Sigma\subset\mathbb K[x_0,x_1]$, \cite[Theorem 2.2]{To1} proves that for any $1\leq a\leq N$, $I_a(\Sigma)$ has linear graded free resolution.
  \item[(c)] If $a=N$, then $I_N(\Sigma)=\langle L_1\cdots L_N\rangle$, which has linear graded free resolution as it is a principal ideal.
  \item[(d)] If $a=N-1$, then \cite[Section 2.1]{To1} shows that $I_{N-1}(\Sigma)$ has linear graded free resolution.
  \item[(e)] If $a=N-2$, then \cite[Theorem 2.5]{To1} shows that $I_{N-2}(\Sigma)$ has linear graded free resolution.
  \item[(f)] Suppose $\Sigma$ has no proportional linear forms, i.e., $\Sigma$ defines a hyperplane arrangement in $\mathbb P^n$. If $\Sigma$ is generic (meaning any ${\rm rk}(\Sigma)$ linear forms of $\Sigma$ are linearly independent), then for any $1\leq a\leq N$, $I_a(\Sigma)$ has linear graded free resolution. Indeed, after a change of variables one can assume that ${\rm rk}(\Sigma)=n+1$. Then $d_1(\mathcal C_{\Sigma})=N-(n+1)+1=N-n$, and part (a) above shows the result for $1\leq a\leq N-n$. For $N-n+1\leq a\leq N$, via Lemma \ref{lemma1} below, $I_a(\Sigma)$ is the defining ideal of a star configuration, and these have been shown to have linear graded free resolution (given by the Eagon-Northcott complex): see \cite[Remark 2.11]{GeHaMi}, \cite[Example 3.4 and Corollary 3.5]{GeHaMiNa}, or \cite[Corollary 3.7]{PaSh}. Independently, one can also obtain this result by applying the proof of \cite[Theorem 2.5]{GaTo} and \cite[Proposition 3.5]{AnGaTo}.

  \item[(f')] More generally, whenever $R/I_a(\Sigma)$ is Cohen-Macaulay, then $I_a(\Sigma)$ has a linear graded free resolution. This can be seen from the discussions at the end of the proof of \cite[Proposition 2.1]{To2}.
  \item[(g)] Generalizing the main result in \cite{AnTo}, in \cite{To4} it is shown that for $a=d_1(\mathcal C_{\Sigma})+1$, $I_a(\Sigma)$ has linear graded free resolution, for any $\Sigma$, a collection of linear forms.
  \item[(h)] If $\Sigma$ defines a line arrangement in $\mathbb P^2$, then by \cite{To4}, for any $1\leq a\leq N$, $I_a(\Sigma)$ has linear graded free resolution.
\end{itemize}

\medskip

In our first main result, Theorem \ref{thm_linear}, we add one more item to this list. Suppose $\Sigma=(\underbrace{\ell_1,\ldots,\ell_1}_{m_1},\ldots,\underbrace{\ell_s,\ldots,\ell_s}_{m_s})$ with $\gcd(\ell_i,\ell_j)=1$ if $i\neq j$. The {\em support of $\Sigma$} is ${\rm Supp}(\Sigma):=\{\ell_1,\ldots,\ell_s\}$. If the support of $\Sigma$ is generic, i.e., any ${\rm rk}(\Sigma)$ elements of ${\rm Supp}(\Sigma)$ are linearly independent, then we show that $I_a(\Sigma)$ has linear graded free resolution, for any $1\leq a\leq m_1+\cdots+m_s$.


\subsection{Star Configurations.} Let $\A=\{H_1,\ldots,H_s\}$ be a collection of $s\geq n+1$ hyperplanes in $\mathbb P^n$, and suppose $\ell_1,\ldots,\ell_s\in R$ are defining linear forms of these hyperplanes: i.e., $H_i=V(\ell_i), i=1,\ldots,s$. Suppose $\A$ is generic, meaning that any $n+1$ of the defining linear forms are linearly independent, or in the language of \cite[Definition 2.1]{GeHaMi}, the hyperplanes $H_1,\ldots,H_s$ {\em meet properly}.

Let $1\leq c\leq n$ be an integer and define {\em the star configuration of codimension $c$ with support $\A$ (in $\mathbb P^n$)} to be
$$V_c(\A):=\bigcup_{1\leq j_1<\cdots<j_c\leq s}H_{j_1}\cap\cdots\cap H_{j_c}.$$ It is clear that the defining ideal is
$$I(V_c(\A))=\bigcap_{1\leq j_1<\cdots<j_c\leq s}\langle \ell_{j_1},\ldots,\ell_{j_c}\rangle,$$ and the {\em $m$-th symbolic power} of this ideal is $$I(V_c(\A))^{(m)}=\bigcap_{1\leq j_1<\cdots<j_c\leq s}\langle \ell_{j_1},\ldots,\ell_{j_c}\rangle^m.$$

The following is a conjecture about the ordinary powers of $I(V_c(\A))$.

\begin{conj}(See \cite[Conjecture 4.1]{GeHaMi})  \label{Conjecture1}  \, For any $m\geq 1$ one has
$$I(V_c(\A))^m=I(V_c(\A))^{(m)}\cap I(V_{c+1}(\A))^{(2m)}\cap\cdots\cap I(V_n(\A))^{((n-c+1)m)}\cap M^{(s-c+1)m}.$$
\end{conj}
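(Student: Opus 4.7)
The plan is to identify $I(V_c(\A))^m$ with an ideal $I_a(\Sigma)$ for a suitable $\Sigma$ of generic support, invoke \thmref{thm_linear}, and then use the resulting regularity bound to reduce the conjectural equality to an identification of $M$-saturations. Set $\Sigma:=(\underbrace{\ell_1,\ldots,\ell_1}_{m},\ldots,\underbrace{\ell_s,\ldots,\ell_s}_{m})$, so $|\Sigma|=sm$ and ${\rm Supp}(\Sigma)$ is generic by hypothesis, and put $a:=(s-c+1)m$. Using \lemref{lemma1} one has $I(V_c(\A))=I_{s-c+1}(\ell_1,\ldots,\ell_s)$, so a generator of $I(V_c(\A))^m$ is an $m$-fold product of $(s-c+1)$-fold products of distinct $\ell_i$'s, i.e.\ a monomial $\ell_1^{b_1}\cdots\ell_s^{b_s}$ with $b_1+\cdots+b_s=a$ and $b_i\leq m$; a Hall-type bipartite matching argument shows every such monomial arises in this form, hence $I(V_c(\A))^m=I_a(\Sigma)$. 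By \thmref{thm_linear} this ideal has a linear graded free resolution, so its Castelnuovo-Mumford regularity equals $a$, and consequently $I(V_c(\A))^m=\bigl(I(V_c(\A))^m\bigr)^{\rm sat}\cap M^a$, where the saturation is with respect to $M$.

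The containment LHS $\subseteq$ RHS is elementary: by pigeonhole, any generator $\ell_{i_1}\cdots\ell_{i_{s-c+1}}$ of $I(V_c(\A))$ has at least $(s-c+1)+j-s=j-c+1$ indices in any given $j$-subset $K\subseteq\{1,\ldots,s\}$, so $I(V_c(\A))\subseteq\langle\ell_i:i\in K\rangle^{j-c+1}$ for every $|K|=j$, and intersecting over $K$ yields $I(V_c(\A))\subseteq I(V_j(\A))^{(j-c+1)}$. Raising to the $m$-th power via the standard inequality $(I^{(k)})^m\subseteq I^{(km)}$ gives $I(V_c(\A))^m\subseteq I(V_j(\A))^{((j-c+1)m)}$ for all $c\leq j\leq n$, and the inclusion in $M^{(s-c+1)m}$ is immediate from degrees. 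Setting $\widetilde J:=\bigcap_{j=c}^{n}I(V_j(\A))^{((j-c+1)m)}$, the conjecture reduces to $I(V_c(\A))^m=\widetilde J\cap M^a$; by the regularity identity of the previous paragraph this is equivalent to $\bigl(I(V_c(\A))^m\bigr)^{\rm sat}=\widetilde J$.

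Since $\widetilde J$ is an intersection of $P_K$-primary ideals $\langle\ell_i:i\in K\rangle^{(|K|-c+1)m}$ with $|K|\leq n$, none of which is $M$-primary, $\widetilde J$ is already $M$-saturated and the inclusion $\bigl(I(V_c(\A))^m\bigr)^{\rm sat}\subseteq\widetilde J$ is immediate from $I(V_c(\A))^m\subseteq\widetilde J$. For the reverse inclusion I would show that $\widetilde J/I(V_c(\A))^m$ is $M$-torsion by checking that the two ideals have the same localization at every non-$M$ associated prime $P_K$ of $\widetilde J$. After localizing at such a $P_K$ with $|K|=j$, the forms $\ell_i$ with $i\notin K$ become units and the problem becomes a monomial-ideal identity in the regular local ring $R_{P_K}$ of dimension $j$, corresponding exactly to the $s=n+1$ (base) case of the conjecture itself, carried out in a smaller ambient $\mathbb{P}^{j-1}$. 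The main obstacle I anticipate is precisely this base case: one must show that a monomial $\prod_{i\in K}\ell_i^{a_i}$ satisfying $\sum_{i\in K'}a_i\geq(|K'|-c+1)m$ for every $K'\subseteq K$ with $|K'|\geq c$ decomposes as a multiple of some $a$-fold generator of $I_a(\Sigma)$, which after a standard change of variables becomes a pure combinatorial identity for monomial ideals. The key ingredient should be the inequality $\sum_{i\in K}\min(a_i,m)\geq(|K|-c+1)m$, obtained by pigeonhole applied to the subset $\{i\in K:a_i>m\}$, from which a Hall-type matching produces the required decomposition.
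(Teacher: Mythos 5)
Your proposal follows essentially the same route as the paper: identify $I(V_c(\A))^m$ with $I_{m(s-c+1)}(\ell_1^m\cdots\ell_s^m)$ (the content of \lemref{lemma1}; your Hall-type decomposition fills in the detail the paper dismisses as a ``simple observation''), invoke \thmref{thm_linear} together with the regularity argument of \remref{remark0} to get $I(V_c(\A))^m=\bigl(I(V_c(\A))^m\bigr)^{\rm sat}\cap M^{(s-c+1)m}$, and then identify the saturation with $\bigcap_{j=c}^{n} I(V_j(\A))^{((j-c+1)m)}$. The only divergence is in this last step: the paper simply quotes \cite[Corollary 4.9]{GeHaMi} (and independently has \propref{prop_saturation}, cf.\ \remref{rem_GHM}), whereas you re-derive it by localization plus the inequality $\sum_{i\in K}\min(a_i,m)\ge(|K|-c+1)m$; that inequality is correct (split on whether $\{i\in K: a_i\le m\}$ has at least $c$ elements, applying your hypothesis to that subset in the first case) and is exactly the computation in the second half of the paper's \propref{prop_saturation}, specialized to equal multiplicities $m_i=m$.

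One point needs tightening. To conclude $\widetilde J\subseteq\bigl(I(V_c(\A))^m\bigr)^{\rm sat}$ it does not suffice to compare localizations at the associated primes $P_K$ of $\widetilde J$: a priori $I(V_c(\A))^m$ could have embedded associated primes other than $M$ and not among the $P_K$, and agreement at the linear primes does not control those components (for instance $\langle x^2,xy\rangle\subset\mathbb K[x,y,z]$ agrees with $\langle x\rangle$ after localizing at $\langle x\rangle$, yet its saturation with respect to $\langle x,y,z\rangle$ does not contain $x$ because of the embedded prime $\langle x,y\rangle$). The correct criterion is $\widetilde J R_P\subseteq I(V_c(\A))^m R_P$ for every prime $P\neq M$ containing the power, equivalently for every $P\in{\rm Ass}\bigl(I(V_c(\A))^m\bigr)\setminus\{M\}$, which is how \propref{prop_saturation} is set up. Fortunately your reduction survives this fix verbatim: for any such $P$, genericity forces $K:=\{i:\ell_i\in P\}$ to satisfy $c\le|K|\le n$, the forms $\ell_i$ with $i\notin K$ are units in $R_P$, and both sides localize to the extensions of the same global monomial ideals in the variables indexed by $K$, so the comparison again reduces to the monomial identity your inequality proves. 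With the localization step stated at the right set of primes, the proposal is correct.
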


\begin{rem}\label{remark0} Let $J\subset R$ be an ideal generated in degree $d$. Then $J\subseteq J^{\rm sat}\cap M^d$.\footnote{If $I\subset R$ is a homogeneous ideal, by definition $I^{\rm sat}:=\{f\in R| \exists \, n(f)\geq 0 \mbox{ such that }M^{n(f)}\cdot f\subset I\}$.} If $R/J$ has linear graded free resolution (equivalently, ${\rm reg}(R/J)=d-1$), since ${\rm H}_{\frak m}^0(R/J)=J^{\rm sat}/J$, by \cite[Theorem 4.3]{Ei}, we have $(J^{\rm sat}/J)_e=0, \mbox{ for any }e\geq d$. This means that $J^{\rm sat}\cap M^d\subseteq J$, and therefore $$J=J^{\rm sat}\cap M^d.$$

It is shown in \cite[Corrolary 4.9]{GeHaMi} that for any $m\geq 1$,
$$\left(I(V_c(\A))^m\right)^{\rm sat}=I(V_c(\A))^{(m)}\cap I(V_{c+1}(\A))^{(2m)}\cap\cdots\cap I(V_n(\A))^{((n-c+1)m)}.$$
 Also by \cite[Proposition 2.9 (4)]{GeHaMi}, one has that $I(V_c(\A))$ is generated in degree $s-c+1$, and therefore, $I(V_c(\A))^m$ is generated in degree $(s-c+1)m$.
So in order to prove Conjecture \ref{Conjecture1}, it is enough to show that $I(V_c(\A))^m$ has linear graded free resolution.
\end{rem}

\smallskip

 Conjecture \ref{Conjecture1}  has been verified to be true in the following cases (See \cite[Remark 4.4]{GeHaMi}):
\begin{itemize}
  \item[(i)] $m=1$. This is true because of Remark \ref{remark0} and item (f) above.
  \item[(ii)] $c=1$. This is true because $I(V_c(\A))$ is a principal ideal.
  \item[(iii)] $c=n$. This is true by \cite[Lemma 2.3.3(c), Lemma 2.4.2]{BoHa}.
  \item[(iv)] $n=s-1$. This is true by \cite[Theorem 4.8]{GeHaMi}.
\end{itemize}

We are not aware at this moment of any other consistent progress in proving this conjecture, other than the observations we make in Remark \ref{remark2}. Nonetheless, by identifying ordinary powers of defining ideals of star configurations with ideals generated by $a$-fold products of linear forms (see Lemma \ref{lemma1}), then Theorem \ref{thm_linear}, Remark \ref{remark0}, and \cite[Corrolary 4.9]{GeHaMi} will prove Conjecture \ref{Conjecture1} in its full generality (see Theorem \ref{thm_GHMconj}).

In the end, again by using the identification provided by Lemma \ref{lemma1}, we give a positive answer to \cite[Question 4.12]{GeHaMi} and prove a result that calculates the resurgence of the defining ideal of any star configuration (see Theorem \ref{thm_resurgence}), which generalizes \cite[Theorem 4.11]{GeHaMi}.

\section{Ideals generated by $a$-fold products of linear forms have linear graded free resolution}

Let $\Sigma:= (\underbrace{\ell_1,\ldots,\ell_1}_{m_1},\ldots,\underbrace{\ell_s,\ldots,\ell_s}_{m_s})$ be a collection of $N:=m_1+\cdots+m_s$ linear forms in $R:=\mathbb K[x_0,\ldots,x_n]$, with $s, m_1,\ldots,m_s\geq 1$, and $\gcd(\ell_i,\ell_j)=1$ if $i\neq j$. For any $a\geq 0$, consider the ideal generated by $a$-fold products of linear forms of $\Sigma$ $$I_a(\Sigma)=I_a(\ell_1^{m_1}\cdots\ell_s^{m_s}),$$ with the convention that if $a=0$ this ideal equals the entire ring $R$, and if $a>N$ this ideal equals the zero ideal.

One of the most delicate issue is to find a primary decomposition for $I_a(\ell_1^{m_1}\cdots\ell_s^{m_s})$ (and therefore generalizing \cite[Proposition 2.3]{AnGaTo}). We have a first lemma.

\begin{lem}\label{decomposition} For any $1\leq a\leq N$, one has
$$
I_a(\ell_1^{m_1}\cdots\ell_s^{m_s})\subseteq \bigcap_{c=1}^s\left(\bigcap_{1\leq i_1<\cdots<i_c\leq s}\langle\ell_{i_1},\ldots,\ell_{i_c}\rangle^{\mu(i_1,\ldots,i_c)}\right),
$$
where $\displaystyle \mu(i_1,\ldots,i_c):=a-\sum_{j\in [s]\setminus\{i_1,\ldots,i_c\}}m_j$ and any ideal with power $\leq 0$ is replaced by $R$.
\end{lem}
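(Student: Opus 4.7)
The plan is to verify the containment on generators. A typical generator of $I_a(\ell_1^{m_1}\cdots\ell_s^{m_s})$ is of the form $\ell_1^{a_1}\cdots\ell_s^{a_s}$ with $0\leq a_j\leq m_j$ and $a_1+\cdots+a_s=a$. Fix $1\leq c\leq s$ and indices $1\leq i_1<\cdots<i_c\leq s$, and set $\mu:=\mu(i_1,\ldots,i_c)=a-\sum_{j\in[s]\setminus\{i_1,\ldots,i_c\}}m_j$. I want to show every such monomial lies in $\langle\ell_{i_1},\ldots,\ell_{i_c}\rangle^{\mu}$.

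If $\mu\leq 0$ there is nothing to prove since by convention the target ideal is $R$. Otherwise the key elementary observation is the inequality
\[
a_{i_1}+\cdots+a_{i_c}=a-\sum_{j\in[s]\setminus\{i_1,\ldots,i_c\}}a_j\ \geq\ a-\sum_{j\in[s]\setminus\{i_1,\ldots,i_c\}}m_j=\mu,
\]
using only that $a_j\leq m_j$. Thus the factor $\ell_{i_1}^{a_{i_1}}\cdots\ell_{i_c}^{a_{i_c}}$ of the generator is a product of at least $\mu$ elements drawn from $\{\ell_{i_1},\ldots,\ell_{i_c}\}\subset\langle\ell_{i_1},\ldots,\ell_{i_c}\rangle$, and hence sits in $\langle\ell_{i_1},\ldots,\ell_{i_c}\rangle^{\mu}$. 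Multiplying by the remaining factor $\prod_{j\notin\{i_1,\ldots,i_c\}}\ell_j^{a_j}\in R$ keeps us in this ideal.

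Since each generator of $I_a(\Sigma)$ lies in every one of the ideals $\langle\ell_{i_1},\ldots,\ell_{i_c}\rangle^{\mu(i_1,\ldots,i_c)}$ appearing on the right-hand side, the containment follows by intersecting over all choices of $c$ and $(i_1,\ldots,i_c)$.

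There is no real obstacle here; the whole statement is a book-keeping exercise about exponents. The content of the lemma lies not in its proof but in its subsequent use, where presumably one wants the reverse containment to hold, yielding a primary decomposition of $I_a(\Sigma)$ under the genericity hypothesis on $\mathrm{Supp}(\Sigma)$. That converse, which is not asserted here, is what will require the generic position assumption and a nontrivial argument.
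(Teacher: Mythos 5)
Your proof is correct and is essentially the paper's own argument: both check the containment on each generator $\ell_1^{t_1}\cdots\ell_s^{t_s}$ (with $t_j\leq m_j$, $\sum t_j=a$) by showing the exponents on the selected indices sum to at least $\mu(i_1,\ldots,i_c)$, the only cosmetic difference being that you derive the inequality directly from $t_j\leq m_j$ while the paper phrases the same estimate as a contradiction.
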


\begin{proof}  Let $\xi=\ell_1^{t_1}\cdots \ell_s^{t_s}$  be a  minimal ``monomial'' generator in $I_a(\ell_1^{m_1}\cdots\ell_s^{m_s})$, so $\sum_{j=1}^st_j=a$ and $t_j\leq m_j$ for $1\leq j\leq s$. We will show that $\xi\in \langle\ell_{i_1},\ldots,\ell_{i_c}\rangle^{\mu(i_1,\ldots,i_c)}$ for every $1\leq i_1<\cdots<i_c\leq s$ and $1\leq c\leq s$.

To do that we just need to show that
$\sum_{j\in \{i_1,\ldots,i_c\}} t_{j}\geq \mu(i_1,\ldots,i_c)=a-\sum_{j\in [s]\setminus\{i_1,\ldots,i_c\}}m_j$. This follows by the fact that if it does not hold, then $\sum_{j\in [s]\setminus\{i_1,\ldots,i_c\}} t_{j}> \sum_{j\in [s]\setminus\{i_1,\ldots,i_c\}}m_j$, which forces $t_j>m_j$ for some $j\in [s]\setminus\{i_1,\ldots,i_c\}$, a contradiction.
\end{proof}

\medskip

Next, we review the beginning of Section 2 in \cite{To3}. Let $\Sigma=(L_1,\ldots,L_N)$ be a collection of linear forms in $R$. Let $1\leq a\leq N$ and consider $I:=I_a(L_1\cdots L_N)$. Let $\frak p$ be a prime ideal containing $I$, and let $L_{i_1}\cdots L_{i_a}, 1\leq i_1<\cdots<i_a\leq N$, be an arbitrary generator of $I$. For convenience, suppose $i_1=1,\ldots,i_a=a$. Then one of $L_1,\ldots,L_a$ belongs to $\frak p$; say $L_1\in\frak p$. But $L_2\cdots L_{a+1}\in I$, and therefore one of $L_2,\ldots,L_{a+1}$ belongs to $\frak p$; say $L_2\in\frak p$ (here it may happen that $L_1$ and $L_2$ are proportional; it doesn't matter to the argument). So any prime ideal containing $I$ must contain at least $N-a+1$ linear forms from $\Sigma$ (counted with multiplicity), and conversely, any linear prime generated by $N-a+1$ linear forms of $\Sigma$ will contain $I$.

For a prime ideal $\frak p$, consider {\em the closure of $\frak p$ in $\Sigma$}, denoted $cl_{\Sigma}(\frak p)$, to be the set of all elements of $\Sigma$, considered with multiplicity/repetition, that belong to ${\frak p}$. Denote $\nu_{\Sigma}(\frak p):=|cl_{\Sigma}(\frak p)|$. Immediately, $\frak p\supseteq I$ if and only if $\nu_{\Sigma}(\frak p)\geq N-a+1$.

\medskip

Now we go back to $\Sigma= (\underbrace{\ell_1,\ldots,\ell_1}_{m_1},\ldots,\underbrace{\ell_s,\ldots,\ell_s}_{m_s})$. The linear primes showing up in the intersection in Lemma \ref{decomposition} may not be distinct. This justifies considering $\Gamma(\Sigma)$, the set of all of the pairwise distinct primes of the form $\langle\ell_{i_1},\ldots,\ell_{i_c}\rangle$, where $1\leq i_1<\cdots<i_c\leq s$ and $1\leq c\leq s$.

For any $\mathfrak{p}\in \Gamma(\Sigma)$ one has $\frak p=\langle\ell_{i_1},\ldots,\ell_{i_c}\rangle$ for some linear forms $\ell_{i_1},\ldots,\ell_{i_c}$, where $\displaystyle \langle\ell_{i_1},\ldots,\ell_{i_c}\rangle^{\mu(i_1,\ldots,i_c)}$ is  a factor of the intersection in Lemma \ref{decomposition}.  If $\langle\ell_{j_1},\ldots,\ell_{j_{c'}}\rangle=\frak p$, then in the intersection we will ``only see'' $\displaystyle{\frak p}^{\max\{\mu(i_1,\ldots,i_c),\mu(j_1,\ldots,j_{c'})\}}$. So the maximum power of $\frak p$ that can occur is the maximum $\mu(i_1,\ldots,i_u)$, whenever $\langle \ell_{i_1},\ldots,\ell_{i_u}\rangle=\frak p$.

Suppose the closure of $\frak p$ consists of $\ell_{i_1},\ldots,\ell_{i_u}$ taken with their corresponding multiplicities. So ${\frak p}=\langle \ell_{i_1},\ldots,\ell_{i_u}\rangle$, and $\nu_{\Sigma}(\frak p)=m_{i_1}+\cdots+m_{i_u}$. So $$\mu(i_1,\ldots,i_u)=a-(N-\nu_{\Sigma}(\frak p)),$$ which is maximal possible, since we cannot have more than $\nu_{\Sigma}(\frak p)$ linear forms that generate $\frak p$. Hence we have
 $$
 I\subseteq \bigcap_{c=1}^s\left(\bigcap_{1\leq i_1<\cdots<i_c\leq s}\langle\ell_{i_1},\ldots,\ell_{i_c}\rangle^{\mu(i_1,\ldots,i_c)}\right)=\bigcap_{\frak p\in \Gamma(\Sigma)}\frak p^{a-N+\nu_{\Sigma}(\frak p)},
 $$ where of course, $a-N+\nu_{\Sigma}(\frak p)\leq 0$ if and only if $I\nsubseteq\frak p$. For this reason, from now on $\Gamma(\Sigma)$ will consist only of primes that contain $I$. It is worth mentioning that $M=\langle \ell_1,\ldots,\ell_s\rangle=\langle x_0,\ldots,x_n\rangle\in\Gamma(\Sigma)$, and $\nu_{\Sigma}(M)=N$.

\bigskip

The next result is very relevant for the situation when the support of $\Sigma$ is generic, i.e., any ${\rm rk}(\Sigma)$ linear forms from ${\rm Supp}(\Sigma):=\{\ell_1,\ldots,\ell_s\}$ are linearly independent. Via a change of variables, and an embedding into a ring with fewer variables, one can suppose ${\rm rk}(\Sigma)=n+1$ (maximum rank).

\begin{prop}\label{prop_saturation} Let $\Sigma:=(\underbrace{\ell_1,\ldots,\ell_1}_{m_1},\ldots,\underbrace{\ell_s,\ldots,\ell_s}_{m_s})$ be a collection of linear forms of rank $n+1$ in $R:=\mathbb K[x_0,\ldots,x_n]$, with $s, m_1,\ldots,m_s\geq 1$, and ${\rm Supp}(\Sigma)$ generic. Set $N:=m_1+\cdots+m_s$ and for any $1\leq a\leq N$, denote $I:=I_a(\ell_1^{m_1}\cdots\ell_s^{m_s})$. Then
$$
I^{\rm sat}=\bigcap_{\frak p\in \Gamma(\Sigma)\setminus\{M\}}\frak p^{a-N+\nu_{\Sigma}(\frak p)}.$$
\end{prop}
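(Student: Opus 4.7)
I will prove the two containments $I^{\rm sat} \subseteq J$ and $J \subseteq I^{\rm sat}$ separately, where I abbreviate $J := \bigcap_{\frak p \in \Gamma(\Sigma) \setminus \{M\}} \frak p^{a-N+\nu_\Sigma(\frak p)}$. Each $\frak p \in \Gamma(\Sigma) \setminus \{M\}$ has the form $\langle \ell_{i_1},\ldots,\ell_{i_u}\rangle$ with $u \leq n$, and by genericity of the support these linear forms are $K$-linearly independent, so $\frak p$ is the ideal of a regular sequence of linear forms and $\frak p^k$ is $\frak p$-primary for every $k \geq 1$. Thus no component of the intersection defining $J$ is $M$-primary and $J$ is $M$-saturated; combined with the inclusion $I \subseteq J$ recorded in the pre-proposition discussion, this gives $I^{\rm sat} \subseteq J^{\rm sat} = J$.

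For the reverse $J \subseteq I^{\rm sat}$, it suffices to show $J_\frak p \subseteq I_\frak p$ for every prime $\frak p \neq M$, since this forces $J/I$ to be $M$-torsion. Fix such a $\frak p$, and set $\{i_1,\ldots,i_u\} := \{j : \ell_j \in \frak p\}$; genericity forces $u \leq n$, otherwise $\frak p$ would contain $M$. Write $\tilde N := m_{i_1}+\cdots+m_{i_u}$ and $e := a-N+\tilde N$. Since the $\ell_j$ with $j \notin \{i_1,\ldots,i_u\}$ are units in $R_\frak p$, one identifies $IR_\frak p = I'R_\frak p$ and $JR_\frak p = J'R_\frak p$, where, inside the polynomial subring $R' := K[\ell_{i_1},\ldots,\ell_{i_u}] \subseteq R$,
\[ I' := \langle \ell_{i_1}^{b_1}\cdots \ell_{i_u}^{b_u} : b_k \leq m_{i_k},\ \textstyle\sum_k b_k \geq e\rangle, \quad J' := \bigcap_{\substack{T \subseteq \{i_1,\ldots,i_u\} \\ \nu_T \geq N-a+1}} \langle \ell_j : j\in T\rangle^{a-N+\nu_T}. \]
The task reduces to the combinatorial identity $I' = J'$ of monomial ideals in $R'$.

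The inclusion $I' \subseteq J'$ is a direct estimate: for any generator $\ell^b$ of $I'$ and any $T$ with $\nu_T \geq N-a+1$, one has $\sum_{k \in T} b_k \geq e - \sum_{k \notin T} m_{i_k} = a-N+\nu_T$ using $b_k \leq m_{i_k}$. For the reverse $J' \subseteq I'$, given any monomial $\ell^c \in J'$, I set $b_k := \min(c_k, m_{i_k})$, so that $\ell^b$ divides $\ell^c$ and $b_k \leq m_{i_k}$; it remains only to verify $\sum b_k \geq e$, equivalently $\sum_{k \in T^*}(m_{i_k}-c_k) \leq N-a$ for $T^* := \{k : c_k < m_{i_k}\}$. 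A short dichotomy closes the argument: if $\nu_{T^*} \geq N-a+1$, the defining condition of $J'$ at $T = T^*$ yields exactly this bound; otherwise $\sum_{k \in T^*}(m_{i_k}-c_k) \leq \nu_{T^*} \leq N-a$ since $c_k \geq 0$.

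The main obstacle is this combinatorial lemma, specifically the direction $J' \subseteq I'$: the divisor $\ell^b$ of $\ell^c$ must be chosen precisely as the truncation $b_k = \min(c_k, m_{i_k})$, and establishing $\sum b_k \geq e$ requires toggling between a $J'$-condition (when the deficit support $T^*$ is heavy enough to invoke it) and elementary non-negativity of the $c_k$ (when it is not). Everything else — the saturation of $J$, the localization reduction to the polynomial subring $R'$, and the easier direction $I' \subseteq J'$ — is essentially formal once the pre-proposition setup and the fact that powers of linear primes are primary are in place.
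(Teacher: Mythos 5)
Your proposal is correct and follows essentially the same route as the paper: one inclusion from the fact that the intersection is saturated and contains $I$ (Lemma \ref{decomposition}), the other by localizing away from $M$, inverting the forms outside $\frak p$, and reducing to a monomial-ideal identity proved by truncating exponents at the $m_i$'s and checking the degree bound against the component indexed by the truncation set. Your dichotomy on $\nu_{T^*}$ is just a repackaging of the paper's convention that components with nonpositive exponent are replaced by $R$, so no substantive difference.
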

\begin{proof} Denote the big intersection on the righthand side with $\mathbb I$. It is clear that $\mathbb I$ is a saturated ideal. Also, $I\subseteq\mathbb I$ (from Lemma \ref{decomposition}). So it is enough to show that $IR_{P}\supseteq\mathbb I R_{P}$, for any $P\in {\rm Ass}(I)\setminus\{M\}$.

Let $\frak p:=\langle cl_{\Sigma}(P)\rangle$. Since $I\subset P$ and $P\neq M$, then $\frak p\in\Gamma(\Sigma)\setminus\{M\}$.

Suppose ${\frak p}=\langle \ell_1,\ldots,\ell_c\rangle$, where, because ${\rm Supp}(\Sigma)$ is generic, we have ${\rm ht}(\frak p)=c\leq n$. Also because ${\rm Supp}(\Sigma)$ is generic, and since $c\leq n$, we have that $\ell_{c+1},\ldots,\ell_s\notin\frak p$, and by the way $\frak p$ was constructed, these linear forms are not in $P$ as well. So they become invertible under localization.

We have\footnote{For any collection $(L_1,\ldots,L_N)$ of linear forms, and for any $1\leq a\leq N$, one has $I_a(L_1\cdots L_N)=L_N I_{a-1}(L_1\cdots L_{N-1})+I_a(L_1\cdots L_{N-1})$; then we apply this several times.} $$I=I_a(\ell_1^{m_1}\cdots\ell_{s-1}^{m_{s-1}}\ell_s^{m_s})=\ell_s^{m_s}I_{a-m_s}(\ell_1^{m_1}\cdots\ell_{s-1}^{m_{s-1}})+ \cdots+\ell_s I_{a-1}(\ell_1^{m_1}\cdots\ell_{s-1}^{m_{s-1}})+I_a(\ell_1^{m_1}\cdots\ell_{s-1}^{m_{s-1}}).$$ Under localization, $\ell_s$ is invertible, and since $$I_{a-m_s}(\ell_1^{m_1}\cdots\ell_{s-1}^{m_{s-1}})\supset\cdots\supset I_a(\ell_1^{m_1}\cdots\ell_{s-1}^{m_{s-1}}),$$ we have $$I R_{P}=I_{a-m_s}(\ell_1^{m_1}\cdots\ell_{s-1}^{m_{s-1}}) R_{P}.$$ Doing this for all $\ell_{c+1},\ldots,\ell_s$ we have

$$I R_{P}=I_{a-N+\nu_{\Sigma}(\frak p)}(\ell_1^{m_1}\cdots\ell_c^{m_c}) R_{P},$$ since $\nu_{\Sigma}(\frak p)=m_1+\cdots+m_c$, which is $\geq N-a+1$.

Also, under localization in the intersection $\mathbb I$ we can ``see'' only $\frak q\in\Gamma(\Sigma)$, with $\frak q\subseteq\frak p$. Furthermore, if there is an $\ell\in \Sigma$ such that $\ell\in\frak q$, then $\ell\in\{\ell_1,\ldots,\ell_c\}$. This is true because otherwise we will have $c+1\leq n+1$ or fewer elements of ${\rm Supp}(\Sigma)$ that are linearly dependent.

After a change of variables, suppose $\ell_1=x_1,\ldots,\ell_c=x_c$. Everything put together gives

$$I R_{P}=I_{a-N+\nu_{\Sigma}(\frak p)}(x_1^{m_1}\cdots x_c^{m_c})R_{P},$$ and

$$\mathbb I R_{P}=\bigcap_{k=1}^c\left(\bigcap_{1\leq j_1<\cdots< j_k\leq c}\langle x_{j_1},\ldots,x_{j_k}\rangle^{a-N+m_{j_1}+\cdots+m_{j_k}}\right)\,R_{P}.$$

Let $b:=a-N+\nu_{\Sigma}(\frak p)=a-N+m_1+\cdots+m_c\geq 1$. Also let $d=m_1+\cdots+m_c$. Therefore,
$$a-N+m_{j_1}+\cdots+m_{j_k}=b-d+m_{j_1}+\cdots+m_{j_k}.$$

In order to show our equality under localization it is enough to show

$$I_b(x_1^{m_1}\cdots x_c^{m_c})=\bigcap_{k=1}^c\left(\bigcap_{1\leq j_1<\cdots< j_k\leq c}\langle x_{j_1},\ldots,x_{j_k}\rangle^{\mu(j_1,\ldots,j_k)}\right),$$ where $\mu(j_1,\ldots,j_k):=b-d+m_{j_1}+\cdots+m_{j_k}$.

But in Lemma \ref{decomposition} we have seen that the inclusion $\subseteq$ is satisfied.

To prove the other inclusion, let $\xi$ be a monomial in the intersection of ideals of the right side\footnote{We can work just with monomials, since the righthand side is the intersection of monomial ideals.}.

By taking $k=c$ in the intersection, since $\xi\in \langle x_1,\ldots,x_c\rangle^b$, one can write $\xi=x_1^{t_1}\cdots x_c^{t_c}$ with $\sum_{j=1}^ct_j\geq b$.

If $t_j\geq m_j$ for all $j=1,\ldots,c$, then $\xi=(x_1^{t_1-m_1}\cdots x_c^{t_c-m_c})x_1^{m_1}\cdots x_c^{m_c}$. This is an element of $I_d(x_1^{m_1}\cdots x_c^{m_c})$, which in turn is included in $I_b(x_1^{m_1}\cdots x_c^{m_c})$, as $b\leq d$.

Let $\{x_{j_1}, \ldots, x_{j_k}\} \subseteq \{x_1, \ldots, x_c\}$ such that for any $i\in \{j_1,\ldots,j_k\}$, $t_i\leq m_i$ and for any $i\in [c]\setminus\{j_1,\ldots,j_k\}$, $t_i>  m_i$.
 Now
 $$\xi=x_1^{t_1}\cdots x_c^{t_c}=\left(\prod_{i\in \{j_1,\ldots,j_k\}}x_i^{t_i} \cdot \prod_{i\in [c]\setminus\{j_1,\ldots,j_k\}}x_i^{m_i}\right)\cdot \prod_{i\in [c]\setminus\{j_1,\ldots,j_k\}}x_i^{t_i-m_i}.$$

 Set
 $\xi^{\prime} = \prod_{i\in \{j_1,\ldots,j_k\}}x_i^{t_i} \cdot \prod_{i\in [c]\setminus\{j_1,\ldots,j_k\}}x_i^{m_i}$. Then, to prove $\xi\in I_b(x_1^{m_1}\cdots x_c^{m_c})$, we just need to show $\xi^{\prime}\in I_b(x_1^{m_1}\cdots x_c^{m_c})$.

Since the exponent of every $x_i$ in $\xi^{\prime}$ is less than or equal to $m_i$, we just need to show $\sum_{i\in \{j_1,\ldots,j_k\}}t_i+\sum_{i\in [c]\setminus\{j_1,\ldots,j_k\}}m_i\geq b$.

This follows by the fact that $\xi\in \langle x_{j_1},\ldots,x_{j_k}\rangle^{\mu(j_1,\ldots,j_k)}$, therefore $$\sum_{i\in \{j_1,\ldots,j_k\}}t_i\geq  \mu(j_1,\ldots,j_k).$$ Hence
\begin{eqnarray}
\sum_{i\in \{j_1,\ldots,j_k\}}t_i+\sum_{i\in [c]\setminus\{j_1,\ldots,j_k\}}m_i&\geq& \mu(j_1,\ldots,j_k)+\sum_{i\in [c]\setminus\{j_1,\ldots,j_k\}}m_i\nonumber\\
&=&b-\sum_{i\in [c]\setminus\{j_1,\ldots,j_k\}}m_i+\sum_{i\in [c]\setminus\{j_1,\ldots,j_k\}}m_i\nonumber\\
&=&b\nonumber
\end{eqnarray} and the proof is completed.
\end{proof}

\medskip

Now we are ready to prove Conjecture \ref{Conjecture2} when the support of $\Sigma$ is generic.

\smallskip

\begin{thm}\label{thm_linear} Let $\Sigma:=(\underbrace{\ell_1,\ldots,\ell_1}_{m_1},\ldots,\underbrace{\ell_s,\ldots,\ell_s}_{m_s})$ be a collection of linear forms in $R:=\mathbb K[x_0,\ldots,x_n]$, with $s, m_1,\ldots,m_s\geq 1$, and with ${\rm Supp}(\Sigma)$ generic. Set $N:=m_1+\cdots+m_s$. Then for any $1\leq a\leq N$, the ideal $I_a(\ell_1^{m_1}\ell_2^{m_2}\cdots\ell_s^{m_s})$ has linear graded free resolution.
\end{thm}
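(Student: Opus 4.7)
The plan is to prove ${\rm reg}(I_a(\Sigma)) = a$ (equivalently, linear graded free resolution) by strong induction on $N = m_1+\cdots+m_s$. After reducing to ${\rm rk}(\Sigma) = n+1$ via change of variables and embedding into a subring, the case $a = N$ is immediate (principal ideal, item (c) of the introduction), and if all $m_i = 1$ then item (f) applies. Otherwise some $m_s \geq 2$ (after relabeling) and $a \leq N-1$, and one proceeds by induction.

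Set
\[
\widetilde\Sigma := (\ell_1^{m_1},\ldots,\ell_{s-1}^{m_{s-1}},\ell_s^{m_s-1}), \qquad \widetilde\Sigma_0 := (\ell_1^{m_1},\ldots,\ell_{s-1}^{m_{s-1}},\ell_s^{m_s-2}),
\]
both with generic support, of total multiplicities $N-1$ and $N-2$ respectively. By the inductive hypothesis, the ideals $I_a(\widetilde\Sigma)$, $I_{a-1}(\widetilde\Sigma)$, and $I_{a-1}(\widetilde\Sigma_0)$ all have linear graded free resolution. The decomposition $I_a(\Sigma) = I_a(\widetilde\Sigma) + \ell_s \cdot I_{a-1}(\widetilde\Sigma)$ yields a surjection $(f,g)\mapsto f+\ell_s g$ from $I_a(\widetilde\Sigma)\oplus I_{a-1}(\widetilde\Sigma)(-1)$ onto $I_a(\Sigma)$ whose kernel is $K(-1)$, where $K := \bigl(I_a(\widetilde\Sigma):\ell_s\bigr)\cap I_{a-1}(\widetilde\Sigma)$; this gives the short exact sequence
\[
0 \to K(-1) \to I_a(\widetilde\Sigma)\oplus I_{a-1}(\widetilde\Sigma)(-1) \longrightarrow I_a(\Sigma) \to 0.
\]
The middle term has regularity $a$ by induction, so the desired bound ${\rm reg}(I_a(\Sigma)) \leq a$ reduces to ${\rm reg}(K) \leq a$.

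The main obstacle is the colon identity
\[
\bigl(I_a(\widetilde\Sigma):\ell_s\bigr) = I_{a-1}(\widetilde\Sigma_0),
\]
which, once established, gives $K = I_{a-1}(\widetilde\Sigma_0)\cap I_{a-1}(\widetilde\Sigma) = I_{a-1}(\widetilde\Sigma_0)$ (the last equality using $\widetilde\Sigma_0 \subseteq \widetilde\Sigma$) and hence ${\rm reg}(K) = a-1$ by induction, closing the argument. The inclusion $\supseteq$ is immediate, since $\ell_s$ times any $(a-1)$-fold product from $\widetilde\Sigma_0$ is an $a$-fold product from $\widetilde\Sigma$. For $\subseteq$, one compares primary decompositions prime by prime. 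At each $\mathfrak{p}\in\Gamma(\widetilde\Sigma)\setminus\{M\}$, the proof of Proposition \ref{prop_saturation} identifies $I_a(\widetilde\Sigma)$ locally with a monomial ideal $I_b(x_1^{m_{i_1}}\cdots x_c^{m_{i_c}})$ in $c$ variables, and the elementary monomial colon identity
\[
\bigl(I_b(x_1^{m_{i_1}}\cdots x_c^{m_{i_c}}):x_j\bigr) = I_{b-1}(x_1^{m_{i_1}}\cdots x_j^{m_{i_j}-1}\cdots x_c^{m_{i_c}})
\]
together with a direct comparison of the exponents $a-(N-1)+\nu_{\widetilde\Sigma}(\mathfrak{p})$ and $(a-1)-(N-2)+\nu_{\widetilde\Sigma_0}(\mathfrak{p})$ shows the two ideals agree at $\mathfrak{p}$, whether or not $\ell_s\in\mathfrak{p}$. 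At $\mathfrak{p}=M$, the inductive hypothesis together with Remark \ref{remark0} gives $I_a(\widetilde\Sigma) = I_a(\widetilde\Sigma)^{\rm sat}\cap M^a$, so the $M$-primary part of $(I_a(\widetilde\Sigma):\ell_s)$ is $(M^a:\ell_s)=M^{a-1}$, matching the $M$-primary part of $I_{a-1}(\widetilde\Sigma_0)$ (again by induction and Remark \ref{remark0}). Assembling these local identifications yields the global equality and completes the induction.
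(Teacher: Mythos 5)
Your argument is correct in substance, and its technical core coincides with the paper's, but the homological packaging is genuinely different. The paper also inducts, picks a linear form $\ell_1$, proves the colon identity $I_a(\Sigma):\ell_1=I_{a-1}(\Sigma')$ (where $\Sigma'$ is $\Sigma$ with one copy of $\ell_1$ removed) by exactly the mechanism you describe --- Proposition~\ref{prop_saturation} for the saturated parts plus Remark~\ref{remark0} (via the inductive hypothesis) for the $M$-primary part --- and then feeds it into the sequence $0\to R(-1)/J\to R/I\to R/\langle\ell_1,I\rangle\to 0$; since the last term is $S/I_a(\overline{\Sigma})$ for the arrangement restricted to $V(\ell_1)$, the paper must verify that $\overline{\Sigma}$ still has generic support and must run the induction on the pair $(N,\mathrm{rk}(\Sigma))$. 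You instead present $I_a(\Sigma)$ via the sum decomposition $I_a(\widetilde\Sigma)+\ell_s I_{a-1}(\widetilde\Sigma)$ and identify the kernel through the colon $(I_a(\widetilde\Sigma):\ell_s)=I_{a-1}(\widetilde\Sigma_0)$; choosing $\ell_s$ with $m_s\geq 2$ keeps $\mathrm{Supp}(\widetilde\Sigma)=\mathrm{Supp}(\Sigma)$ of full rank, so you never restrict to a hyperplane and need only the single induction parameter $N$ (at the mild price of invoking the hypothesis at $N-2$ as well, and of the regularity bound for a short exact sequence of ideals rather than cyclic quotients). Both proofs ultimately rest on the same pillar: the saturation decomposition of Proposition~\ref{prop_saturation} combined with $J=J^{\mathrm{sat}}\cap M^{\deg}$ from Remark~\ref{remark0}, applied to a colon of the form ``$I_a$ of a collection colon one of its forms equals $I_{a-1}$ of the collection with that form removed.''

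Two points should be tightened. First, when $m_s=2$ and $s=n+1$, the collection $\widetilde\Sigma_0$ has rank $n$, so Proposition~\ref{prop_saturation} as stated does not apply to it; you need either to pass to the subring $\mathbb K[x_1,\ldots,x_n]$ and use the monomial identity established inside that proof (the extended ideal is then automatically saturated), or a separate direct argument --- this is precisely the wrinkle the paper isolates in its $\mathrm{rk}(\Sigma')=n$ case, and your sketch does not mention it. Second, the phrase ``assembling these local identifications'' should be replaced by the global containment chain the paper uses: $(I_a(\widetilde\Sigma):\ell_s)\subseteq (I_a(\widetilde\Sigma)^{\mathrm{sat}}:\ell_s)\cap M^{a-1}$, then $(I_a(\widetilde\Sigma)^{\mathrm{sat}}:\ell_s)\subseteq I_{a-1}(\widetilde\Sigma_0)^{\mathrm{sat}}$ by coloning each component of the intersection in Proposition~\ref{prop_saturation} (using $\Gamma(\widetilde\Sigma_0)\subseteq\Gamma(\widetilde\Sigma)$ and your exponent comparison), and finally $I_{a-1}(\widetilde\Sigma_0)^{\mathrm{sat}}\cap M^{a-1}=I_{a-1}(\widetilde\Sigma_0)$ by induction at level $N-2$ and Remark~\ref{remark0}. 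A literal prime-by-prime localization argument would require controlling the associated primes of the quotient $(I_a(\widetilde\Sigma):\ell_s)/I_{a-1}(\widetilde\Sigma_0)$, which you have not done; the containment chain avoids this entirely and is what your exponent computation actually proves.
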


\begin{proof} We are proving the result by induction on pairs $(N,{\rm rk}(\Sigma))$, with $N\geq {\rm rk}(\Sigma)\geq 1$.

\medskip

\noindent{\bf Base Cases.} If ${\rm rk}(\Sigma)=1$, then $s=1$, and therefore $I_a(\ell_1^{m_1})=\langle\ell_1^a\rangle$ (which has linear graded free resolution).

If $N={\rm rk}(\Sigma)$, then $s={\rm rk}(\Sigma)$ and $m_1=\cdots=m_s=1$. This is a particular case of item (f) in the Introduction, so this base case is also verified.

\medskip

\noindent{\bf Inductive Step.} Suppose $N>{\rm rk}(\Sigma)\geq 2$.

Let $\ell_1\in \Sigma$, and denote $I:=I_a(\ell_1^{m_1}\ell_2^{m_2}\cdots\ell_s^{m_s})$, and $J:=I_{a-1}(\ell_1^{m_1-1}\ell_2^{m_2}\cdots\ell_s^{m_s})$. First we want to show that $I:\ell_1=J$. Since $I=\ell_1J+I_a(\ell_1^{m_1-1}\ell_2^{m_2}\cdots\ell_s^{m_s})$, we obviously have $J\subseteq I:\ell_1$.

For the other inclusion, after a possible change of variables and embedding in a smaller ring, suppose ${\rm rk}(\Sigma)=n+1$.

Let $\Sigma':=(\underbrace{\ell_1,\ldots,\ell_1}_{m_1-1},\underbrace{\ell_2,\ldots,\ell_2}_{m_2},\ldots,\underbrace{\ell_s,\ldots,\ell_s}_{m_s})$. Obviously, $\Sigma'$ has generic support.

\medskip

First suppose that ${\rm rk}(\Sigma')=n$ (which necessarily implies that $m_1=1$). So, after a change of variables we can suppose $\ell_1=x_0$, and $\ell_2,\ldots,\ell_s\in R':=\mathbb K[x_1,\ldots,x_n]$.

Let $f\in I_a(x_0\ell_2^{m_2}\cdots\ell_s^{m_s}):x_0$. Then $x_0f=x_0g+h$, where $g\in I_{a-1}(\ell_2^{m_2}\cdots\ell_s^{m_s})$ and $h\in I_{a}(\ell_2^{m_2}\cdots\ell_s^{m_s})$. So $x_0(f-g)\in I_{a}(\ell_2^{m_2}\cdots\ell_s^{m_s})$, which, since $x_0$ is a nonzero divisor mod $I_{a}(\ell_2^{m_2}\cdots\ell_s^{m_s})$, leads to $f-g\in I_{a}(\ell_2^{m_2}\cdots\ell_s^{m_s})$. But this ideal is trivially included in $I_{a-1}(\ell_2^{m_2}\cdots\ell_s^{m_s})$, and therefore $f\in I_{a-1}(\ell_2^{m_2}\cdots\ell_s^{m_s})$.

\medskip

Suppose ${\rm rk}(\Sigma')=n+1$. As a reminder, $M:=\langle x_0,\ldots,x_n\rangle$.

We have $I\subseteq I^{\rm sat}\cap M^a$, leading to $$I:\ell_1\subseteq (I^{\rm sat}:\ell_1)\cap \underbrace{(M^a:\ell_1)}_{M^{a-1}}.$$ By inductive hypotheses, since ${\rm Supp}(\Sigma')$ remains generic, $J$ has linear graded free resolution, and so, from Remark \ref{remark0}, $J=J^{\rm sat}\cap M^{a-1}$. So in order to prove our desired inclusion, it is enough to show that $I^{\rm sat}:\ell_1\subseteq J^{\rm sat}$.

By Proposition \ref{prop_saturation} we have $$I^{\rm sat}=\bigcap_{\frak p\in \Gamma(\Sigma)\setminus\{M\}}\frak p^{a-N+\nu_{\Sigma}(\frak p)},$$ where $\Gamma(\Sigma)$ is the set of all linear primes that contain $I$, and that are generated by subsets of ${\rm Supp}(\Sigma)$; and

$$J^{\rm sat}=\bigcap_{\frak q\in \Gamma(\Sigma')\setminus\{M\}}\frak q^{(a-1)-(N-1)+\nu_{\Sigma'}(\frak q)},$$ where $\Gamma(\Sigma')$ is the set of all linear primes that contain $J$, and that are generated by subsets of ${\rm Supp}(\Sigma')$.

Since $I\subset J$ (because $I_a(\ell_1^{m_1-1}\ell_2^{m_2}\cdots\ell_s^{m_s})\subset J$), and ${\rm Supp}(\Sigma')\subseteq {\rm Supp}(\Sigma)$, we have $\Gamma(\Sigma')\subseteq \Gamma(\Sigma)$, and hence
$$I^{\rm sat}:\ell_1\subseteq\bigcap_{\frak q\in \Gamma(\Sigma')\setminus\{M\}}(\frak q^{a-N+\nu_{\Sigma}(\frak q)}:\ell_1).$$

Let $\frak q\in\Gamma(\Sigma')\setminus\{M\}$.
\begin{itemize}
  \item[(1)] If $\ell_1\notin\frak q$, then $\nu_{\Sigma'}(\frak q)=\nu_{\Sigma}(\frak q)$. So $$\frak q^{a-N+\nu_{\Sigma}(\frak q)}:\ell_1= \frak q^{a-N+\nu_{\Sigma}(\frak q)}=\frak q^{(a-1)-(N-1)+\nu_{\Sigma'}(\frak q)}.$$
  \item[(2)] If $\ell_1\in\frak q$, then $\nu_{\Sigma'}(\frak q)=\nu_{\Sigma}(\frak q)-1$, so
$$\frak q^{a-N+\nu_{\Sigma}(\frak q)}:\ell_1=\frak q^{a-N+\nu_{\Sigma}(\frak q)-1}=\frak q^{(a-1)-(N-1)+\nu_{\Sigma'}(\frak q)}.$$
\end{itemize}

Therefore, from items (1) and (2) above, and by Proposition \ref{prop_saturation}, we obtained the desired inclusion $I^{\rm sat}:\ell_1\subseteq J^{\rm sat}$, and therefore $$I:\ell_1=J.$$

\smallskip

Now this equality gives the short exact sequence of $R$-graded modules:

$$0\longrightarrow R(-1)/J\stackrel{\cdot\ell_1}\longrightarrow R/I\longrightarrow R/\langle\ell_1,I\rangle\longrightarrow 0.$$

\smallskip

Since $J$ is generated in degree $a-1$, by inductive hypotheses and \cite[Theorem 1.2]{EiGo}, the Castelnuovo-Mumford regularity of the leftmost nonzero module is $${\rm reg}\left(R(-1)/J\right)=(a-2)+1=a-1.$$

Considering the rightmost nonzero module $R/\langle\ell_1,I\rangle$, after a change of variables, we may suppose $\ell_1=x_0$. For $j=2,\ldots,s$, let $\ell_j=c_jx_0+\bar{\ell}_j$, where $c_j\in\mathbb K$ and $\bar{\ell}_j\in S:=\mathbb K[x_1,\ldots,x_n]$. Let $\overline{\Sigma}=(\underbrace{\bar{\ell}_2,\ldots,\bar{\ell}_2}_{m_2},\ldots,\underbrace{\bar{\ell}_s,\ldots,\bar{\ell}_s}_{m_s})$. Then $|\overline{\Sigma}|=m_2+\cdots+m_s\leq N-1$, and ${\rm rk}(\overline{\Sigma})=n$. Moreover, one has $$\langle\ell_1, I\rangle=\langle x_0, I_a(\overline{\Sigma})\rangle$$
and therefore $R/\langle\ell_1,I\rangle\cong S/I_a(\overline{\Sigma})$.

Suppose there exist $d_1,\ldots,d_n\in\mathbb K$ such that
$d_1\bar{\ell}_{i_1}+\cdots+d_n\bar{\ell}_{i_n}=0$, for some $2\leq i_1<\cdots<i_n\leq s$. Then we have $d_1\ell_{i_1}+\cdots+d_n\ell_{i_n}-(c_{i_1}d_1+\cdots+c_{i_n}d_n)\ell_1=0$. But these $n+1$ linear forms are linearly independent because ${\rm Supp}(\Sigma)$ is generic, so $d_1=\cdots=d_n=0$. This leads to ${\rm Supp}(\overline{\Sigma})$ being generic as well.

If $a\leq |\overline{\Sigma}|$, then by induction hypotheses, ${\rm reg}(S/I_a(\overline{\Sigma}))=a-1$, and so via \cite[Corollary 4.6]{Ei}, ${\rm reg}(R/\langle\ell_1,I\rangle)=a-1$. On the other hand, if $a> |\overline{\Sigma}|$, then  $I_a(\overline{\Sigma})=0$ by convention, so ${\rm reg}(R/\langle\ell_1,I\rangle)={\rm reg}(R/\langle\ell_1\rangle)=0$.

\medskip

Finally, \cite[Corollary 20.19 b.]{Ei2} gives that ${\rm reg}(R/I)\leq a-1$, and since $I$ is generated in degree $a$, we obtain that $I$ has linear graded free resolution.
\end{proof}

With the notations and discussions at the beginning of this section, we have the following result.

\begin{cor}\label{cor_decomposition} Let $\Sigma:=(\underbrace{\ell_1,\ldots,\ell_1}_{m_1},\ldots,\underbrace{\ell_s,\ldots,\ell_s}_{m_s})$ be a collection of linear forms in $R:=\mathbb K[x_0,\ldots,x_n]$, with $s, m_1,\ldots,m_s\geq 1$, ${\rm rk}(\Sigma)=n+1$, and with ${\rm Supp}(\Sigma)$ generic. Set $N:=m_1+\cdots+m_s$. Then for any $1\leq a\leq N$, we have the decomposition
$$I_a(\ell_1^{m_1}\ell_2^{m_2}\cdots\ell_s^{m_s})=\bigcap_{\frak p\in \Gamma(\Sigma)}\frak p^{a-N+\nu_{\Sigma}(\frak p)} =\bigcap_{c=1}^s\left(\bigcap_{1\leq i_1<\cdots<i_c\leq s}\langle\ell_{i_1},\ldots,\ell_{i_c}\rangle^{\mu(i_1,\ldots,i_c)}\right).$$
\end{cor}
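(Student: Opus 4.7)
The plan is to assemble the corollary from the three main ingredients already established in the excerpt: Lemma \ref{decomposition} (for one inclusion), Theorem \ref{thm_linear} (to invoke the linear resolution), Remark \ref{remark0} (to translate linear resolution into the identity $I=I^{\rm sat}\cap M^a$), and Proposition \ref{prop_saturation} (to compute $I^{\rm sat}$). Nothing new needs to be proved; the point is to show that the two displayed intersections coincide and that $I_a(\ell_1^{m_1}\cdots\ell_s^{m_s})$ equals them.

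First I would handle the second equality, between the intersection indexed by $\Gamma(\Sigma)$ and the intersection indexed by the tuples $(i_1,\ldots,i_c)$. This is essentially tautological from the discussion preceding Proposition \ref{prop_saturation}: different tuples may generate the same prime $\frak p\in\Gamma(\Sigma)$, and the only exponent that survives in the intersection is the largest one, which equals $a-N+\nu_\Sigma(\frak p)$ (the representation by $\frak p=\langle\ell_{i_1},\ldots,\ell_{i_u}\rangle$ with $u$ as large as possible, i.e., using all linear forms of $\Sigma$, counted with multiplicity, that lie in $\frak p$). Tuples whose associated exponent $\mu(i_1,\ldots,i_c)$ is $\leq 0$ contribute $R$ and are harmless; equivalently, the primes $\frak p$ appearing in $\Gamma(\Sigma)$ are precisely those containing $I$, which are exactly those for which $a-N+\nu_\Sigma(\frak p)\geq 1$.

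Next I would prove the main equality. Lemma \ref{decomposition} gives the inclusion $I\subseteq \bigcap_{\frak p\in\Gamma(\Sigma)}\frak p^{a-N+\nu_\Sigma(\frak p)}$ for free (combined with the discussion above). For the reverse inclusion, by Theorem \ref{thm_linear} the ideal $I$ has linear graded free resolution, and since $I$ is generated in degree $a$, Remark \ref{remark0} yields $I=I^{\rm sat}\cap M^a$. Proposition \ref{prop_saturation} computes $I^{\rm sat}=\bigcap_{\frak p\in\Gamma(\Sigma)\setminus\{M\}}\frak p^{a-N+\nu_\Sigma(\frak p)}$. Finally, noting that $M\in\Gamma(\Sigma)$ with $\nu_\Sigma(M)=N$, so $M^{a-N+\nu_\Sigma(M)}=M^a$, I can put the missing factor back in to conclude
\[
I \;=\; I^{\rm sat}\cap M^a \;=\; \Big(\bigcap_{\frak p\in\Gamma(\Sigma)\setminus\{M\}}\frak p^{a-N+\nu_\Sigma(\frak p)}\Big)\cap M^{a-N+\nu_\Sigma(M)} \;=\; \bigcap_{\frak p\in\Gamma(\Sigma)}\frak p^{a-N+\nu_\Sigma(\frak p)}.
\]

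There is no real obstacle; the corollary is a bookkeeping consequence of the linearity of the resolution together with the earlier saturation computation. The only thing that requires a touch of care is checking that the tuple-indexed intersection really collapses to the $\Gamma(\Sigma)$-indexed one with the correct exponents, but this was already laid out in the discussion preceding Proposition \ref{prop_saturation}.
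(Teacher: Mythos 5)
Your argument is correct and matches the paper's own proof: invoke Theorem \ref{thm_linear} and Remark \ref{remark0} to get $I=I^{\rm sat}\cap M^a$, use Proposition \ref{prop_saturation} together with $\nu_{\Sigma}(M)=N$ to restore the factor $M^a$ and obtain the first equality, and refer the second equality to the discussion preceding Proposition \ref{prop_saturation}. The appeal to Lemma \ref{decomposition} for one inclusion is harmless but redundant, since the chain of equalities already yields both directions.
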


\begin{proof} By Theorem \ref{thm_linear}, the ideal $I:=I_a(\ell_1^{m_1}\ell_2^{m_2}\cdots\ell_s^{m_s})$ has linear graded free resolution. Therefore, by Remark \ref{remark0}, $I=I^{\rm sat}\cap M^a$. Since $\nu_{\Sigma}(M)=N$, by Proposition \ref{prop_saturation}, we get the first equality in the statement. The second equality has been addressed in the discussions right before presenting Proposition \ref{prop_saturation}.
\end{proof}

It is worth noting that since the first equality presents a decomposition of $I$ that is irredundant and irreducible, we have $\Gamma(\Sigma)={\rm Ass}(I)$.

\section{Star configurations}

\subsection{The Geramita-Harbourne-Migliore Conjecture.} Let $\A=\{H_1,\ldots,H_s\}$ be a collection of $s\geq n+1$ hyperplanes in $\mathbb P^n$, and suppose $\ell_1,\ldots,\ell_s\in R$ are defining linear forms of these hyperplanes: i.e., $H_i=V(\ell_i), i=1,\ldots,s$; we will abuse notation by saying $\A=\{\ell_1,\ldots,\ell_s\}\subset R$. Suppose $\A$ is generic, and let $1\leq c\leq n$.

\begin{lem}\label{lemma1} The defining ideal of the star configuration $V_c(\A)$ satisfies $$I(V_c(\A))=I_{s-c+1}(\ell_1\cdots\ell_s).$$ Furthermore, for any $m\geq 1$, we have
$$I(V_c(\A))^m=I_{m(s-c+1)}(\ell_1^m\cdots\ell_s^m).$$
\end{lem}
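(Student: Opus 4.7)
The first equality splits as two inclusions. For $I_{s-c+1}(\ell_1\cdots\ell_s)\subseteq I(V_c(\A))$, pick any generator $\ell_{i_1}\cdots\ell_{i_{s-c+1}}$ and any $c$-subset $\{j_1,\ldots,j_c\}\subset[s]$. Since the complement of the $c$-subset has size $s-c<s-c+1$, by pigeonhole the index set $\{i_1,\ldots,i_{s-c+1}\}$ must meet $\{j_1,\ldots,j_c\}$; hence one factor $\ell_{i_k}$ lies in $\langle\ell_{j_1},\ldots,\ell_{j_c}\rangle$ and the entire product does too. Intersecting over all $c$-subsets of $[s]$ gives the claim. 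For the reverse inclusion $I(V_c(\A))\subseteq I_{s-c+1}(\ell_1\cdots\ell_s)$, I would invoke \cite[Proposition 2.9]{GeHaMi}, which (for generic $\A$) describes a minimal generating set of $I(V_c(\A))$ as exactly the squarefree $(s-c+1)$-fold products $\ell_{i_1}\cdots\ell_{i_{s-c+1}}$.

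For the second equality, I would substitute the first into $I(V_c(\A))^m=I_{s-c+1}(\ell_1\cdots\ell_s)^m$ and reduce to proving this equals $I_{m(s-c+1)}(\ell_1^m\cdots\ell_s^m)$. The inclusion $\subseteq$ is immediate: any generator on the left is a product of $m$ generators of $I_{s-c+1}(\ell_1\cdots\ell_s)$, each of which involves $s-c+1$ distinct $\ell_i$'s (so each $\ell_i$ appears with exponent $0$ or $1$ in each factor). Concatenating, the result is a degree $m(s-c+1)$ monomial in which each $\ell_i$ appears to a power at most $m$, which is precisely a valid generator of $I_{m(s-c+1)}(\ell_1^m\cdots\ell_s^m)$.

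The reverse inclusion $\supseteq$ is the main technical point and reduces to a combinatorial decomposition. A generator of $I_{m(s-c+1)}(\ell_1^m\cdots\ell_s^m)$ takes the form $\ell_1^{t_1}\cdots\ell_s^{t_s}$ with $0\leq t_i\leq m$ and $\sum_{i=1}^s t_i=m(s-c+1)$, and we must write it as a product of $m$ generators of $I_{s-c+1}(\ell_1\cdots\ell_s)$, equivalently, partition the multiset in which $\ell_i$ has multiplicity $t_i$ into $m$ blocks of size $s-c+1$ each containing distinct $\ell_i$'s. I would do this by induction on $m$ via a greedy choice: after reordering so that $t_1\geq t_2\geq\cdots\geq t_s$, observe that if $j$ is the number of indices with $t_i=m$, then $jm\leq\sum_i t_i=m(s-c+1)$, forcing $j\leq s-c+1$, so every index with maximal multiplicity already lies in the top $s-c+1$. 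Pick $\ell_1\ell_2\cdots\ell_{s-c+1}$ as the first block and decrement each of $t_1,\ldots,t_{s-c+1}$ by one; the new multiplicities sum to $(m-1)(s-c+1)$ and are all bounded by $m-1$, so the inductive hypothesis supplies the remaining $m-1$ blocks. The main obstacle is precisely this verification that the full-multiplicity indices remain within the top slice at every step — this is the combinatorial core of the lemma, and the pigeonhole bound $j\leq s-c+1$ is the key ingredient that makes the greedy decomposition succeed.
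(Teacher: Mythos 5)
Your argument is correct in substance but follows a genuinely different route from the paper. For the first identity the paper argues by induction on $s$: the base case $s=n+1$ is the coordinate-hyperplane case, where the intersection of all codimension-$c$ coordinate primes is the squarefree monomial ideal $I_{n-c+2}(x_0\cdots x_n)$, and the inductive step matches the recursion $I(V_c(\A))=\ell_s\, I(V_c(\A'))+I(V_{c-1}(\A'))$ (extracted from the proof of \cite[Proposition 2.9 (4)]{GeHaMi}) with the analogous recursion $I_a(\ell_1\cdots \ell_s)=\ell_s\, I_{a-1}(\ell_1\cdots \ell_{s-1})+I_a(\ell_1\cdots \ell_{s-1})$ for fold products. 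You instead prove one inclusion by pigeonhole and quote the generation statement of \cite[Proposition 2.9]{GeHaMi} for the other; this is legitimate (and, if the cited statement is read as giving the exact generating set, it already yields both inclusions, making the pigeonhole half redundant), but it uses the external result as a black box where the paper borrows only an intermediate recursion from its proof and stays self-contained. For the second identity the roles are reversed: the paper dismisses $(I_a(\ell_1\cdots\ell_s))^m=I_{ma}(\ell_1^m\cdots\ell_s^m)$ as a ``simple observation,'' whereas you prove the nontrivial direction in detail via the sorted greedy decomposition, and your bound $j\le s-c+1$ (full-multiplicity indices stay in the top slice) is exactly the right point to ensure the inductive hypothesis applies after decrementing. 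One small verification is missing there: before decrementing you should also note that $t_{s-c+1}\ge 1$, i.e., all of the top $s-c+1$ multiplicities are positive, so the chosen block genuinely divides $\xi$; this follows by the same counting, since $t_{s-c+1}=0$ would force $\sum_i t_i\le (s-c)m<m(s-c+1)$. With that one line added your greedy induction is complete, and it in fact documents a step the paper leaves to the reader.
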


\begin{proof} We prove the first part by induction on $|\A|=s\geq n+1$.
The base case of induction is $s=n+1$. In this instance, after a change of variables we can suppose that $\A$ consists of the coordinate hyperplanes, i.e., $\ell_i=x_{i-1}, i=1,\ldots,n+1$. So $$I(V_c(\A))=\bigcap_{0\leq j_1<\cdots<j_c\leq n}\langle x_{j_1},\ldots,x_{j_c}\rangle,$$ which by standard results on squarefree monomial ideals equals to $$I_{n-c+2}(x_0\cdots x_{n})=\langle \{x_{i_1}\cdots x_{i_{n-c+2}}\,|\,0\leq i_1<\cdots<i_{n-c+2}\leq n\}\rangle.$$

Suppose $s>n+1$. In the proof of \cite[Proposition 2.9 (4)]{GeHaMi} we have $$I(V_c(\A))=\ell_s\cdot I(V_c(\A'))+I(V_{c-1}(\A')),$$ where $\A'=\A\setminus\{V(\ell_s)\}$, and by inductive hypotheses we have

$$I(V_c(\A'))=I_{s-c}(\ell_1\cdots\ell_{s-1}) \mbox{ and }I(V_{c-1}(\A'))=I_{s-c+1}(\ell_1\cdots\ell_{s-1}).$$ But it is obvious that at the level of ideals generated by fold products of linear forms we have $$I_a(\ell_1\cdots \ell_s)=\ell_s\cdot I_{a-1}(\ell_1\cdots \ell_{s-1})+I_a(\ell_1\cdots \ell_{s-1}).$$ So the conclusion follows.

\medskip

The second part follows immediately from the simple observation that $(I_a(\ell_1\cdots \ell_s))^m=I_{ma}(\ell_1^m\cdots \ell_s^m)$.
\end{proof}

\smallskip

\begin{thm}\label{thm_GHMconj} Conjecture \ref{Conjecture1} is true in its full generality.
\end{thm}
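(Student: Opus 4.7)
The plan is to combine the three main ingredients already assembled in the paper: Lemma \ref{lemma1}, Theorem \ref{thm_linear}, and Remark \ref{remark0}, together with the saturation formula of \cite[Corollary 4.9]{GeHaMi}. The point is that once we know $I(V_c(\A))^m$ has a linear graded free resolution, Remark \ref{remark0} tells us that $I(V_c(\A))^m$ is obtained from its saturation by intersecting with the appropriate power of $M$, and the saturation has already been computed in \cite{GeHaMi}.

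First, I would apply Lemma \ref{lemma1} to rewrite
\[
I(V_c(\A))^m = I_{m(s-c+1)}\bigl(\ell_1^m\cdots\ell_s^m\bigr).
\]
The collection $\Sigma := (\underbrace{\ell_1,\ldots,\ell_1}_{m},\ldots,\underbrace{\ell_s,\ldots,\ell_s}_{m})$ has support ${\rm Supp}(\Sigma)=\A$, which is generic by hypothesis, so Theorem \ref{thm_linear} applies and $I(V_c(\A))^m$ has linear graded free resolution.

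Second, since $I(V_c(\A))$ is generated in degree $s-c+1$ (by \cite[Proposition 2.9 (4)]{GeHaMi}), the ordinary power $I(V_c(\A))^m$ is generated in degree $m(s-c+1)$. By Remark \ref{remark0} applied to $J = I(V_c(\A))^m$, we obtain
\[
I(V_c(\A))^m = \bigl(I(V_c(\A))^m\bigr)^{\rm sat}\cap M^{m(s-c+1)}.
\]
Finally, substituting the saturation formula from \cite[Corollary 4.9]{GeHaMi},
\[
\bigl(I(V_c(\A))^m\bigr)^{\rm sat} = I(V_c(\A))^{(m)}\cap I(V_{c+1}(\A))^{(2m)}\cap\cdots\cap I(V_n(\A))^{((n-c+1)m)},
\]
yields exactly the equality asserted in Conjecture \ref{Conjecture1}.

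There is no real obstacle: all the hard work has been done in Theorem \ref{thm_linear}, and the remaining steps are a direct assembly. The only thing worth double-checking is that the generating degree $m(s-c+1)$ of $I(V_c(\A))^m$ matches the exponent $(s-c+1)m$ appearing on the $M$-factor of Conjecture \ref{Conjecture1}, which it does trivially.
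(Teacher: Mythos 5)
Your argument is correct and is essentially the same as the paper's: the paper also proves Conjecture \ref{Conjecture1} by combining Lemma \ref{lemma1} with Theorem \ref{thm_linear} to get a linear resolution for $I(V_c(\A))^m$, then invoking Remark \ref{remark0} and \cite[Corollary 4.9]{GeHaMi}. You have simply written out explicitly the assembly that the paper leaves as a one-line deduction.
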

\begin{proof} As we mentioned already in the Introduction, from Lemma \ref{lemma1} and Theorem \ref{thm_linear}, $I(V_c(\A))^m$ have linear graded free resolution, and therefore via Remark \ref{remark0} and from \cite[Corollary 4.9]{GeHaMi}, Conjecture \ref{Conjecture1} is true.
\end{proof}

\smallskip

\begin{rem}\label{rem_GHM} As an exercise, we will show that the decomposition of $(I_{m(s-c+1)}(\ell_1^m\cdots\ell_s^m))^{\rm sat}$ presented in Proposition \ref{prop_saturation} matches the one presented in \cite[Corollary 4.9]{GeHaMi}.

Let $\Sigma=(\underbrace{\ell_1,\ldots,\ell_1}_{m},\ldots,\underbrace{\ell_s,\ldots,\ell_s}_{m})$. So in Proposition \ref{prop_saturation} we have $N=ms$, and $a=m(s-c+1)$.

As we have seen, any associated prime of $I$ (so an element in $\Gamma(\Sigma)$) is generated by at least $N-a+1=m(c-1)+1$ linear forms from $\Sigma$. So we must pick at least $c$ linear forms from $\A:=\{\ell_1,\ldots,\ell_s\}$ to be able to generate such a minimal prime (indeed picking $c-1$ elements from $\A$ each $m$ times it forces us to pick another linear form from the remaining $s-c$ elements of $\A$). Since any $n+1$ elements of $\A$ are linearly independent, since we don't want to obtain $M$, the most number of linear forms from $\A$ we can pick is $n$.

So, any codimension $j$ associated prime $\frak p$ over $I$, where $c\leq j\leq n$, is of the form $\frak p=\langle \ell_{i_1},\ldots,\ell_{i_j}\rangle$, where $1\leq i_1<\cdots<i_j\leq s$. \footnote{We can adjust conveniently how many times we pick each $\ell_{i_1},\ldots,\ell_{i_j}$, such that we have $m(c-1)+1$ linear forms chosen from $\Sigma$.} But in this situation, $\nu_{\Sigma}(\frak p)=mj$.

With $a-N+\nu_{\Sigma}(\frak p)=m(j-c+1)$, we get that the codimension $j$ component of $I^{\rm sat}$ is $$\bigcap_{1\leq i_1<\cdots<i_j\leq s}\langle\ell_{i_1},\ldots,\ell_{i_j}\rangle^{m(j-c+1)},$$ which equals $I(V_j(\A))^{(m(j-c+1))}$. As $j=c,\ldots,n$, we obtain indeed the decomposition in \cite[Corollary 4.9]{GeHaMi}.
\end{rem}

We end this subsection with an observation on how to prove Conjecture \ref{Conjecture1} in a new case, without appealing to the technique we have developed in the previous section.

\begin{rem}\label{remark2} With the identification in Lemma \ref{lemma1} established, the main result in \cite{BrCoVa} will give that Conjecture \ref{Conjecture1} is true for $c=2$ and also for the already proven case $c=n$. Here is how:

\medskip

\noindent\underline{$c=2$}. At the end of the proof of \cite[Proposition 2.1]{To2}, we can see that for any $\Sigma=(L_1,\ldots,L_N)$ and any $1\leq a\leq N$, $I_a(L_1\cdots L_N)$ is generated by the maximal minors of an $a\times N$ matrix with linear entries. The matrix looks like

$$X:=\left[\begin{array}{cccc}c_{11}L_1&c_{12}L_2&\cdots&c_{1N}L_N\\c_{21}L_1&c_{22}L_2&\cdots&c_{2N}L_N\\ \vdots & \vdots& &\vdots\\ c_{a1}L_1&c_{a2}L_2&\cdots&c_{aN}L_N\end{array}\right],$$ where the constants $c_{uv}$ are generic (meaning that none of the $a\times a$ minors of $X$ vanish. In fact, for any $j=1,\ldots,a$, the ideal generated by the $j\times j$ minors of $X$, denoted $I_j(X)$, equals $I_j(L_1\cdots L_N)$.

Let $c=2$,  $N=s$, $a=s-c+1=s-1$, and $\Sigma=(\ell_1,\ldots,\ell_s)$, where $\ell_i$ define the generic arrangement~$\A$.
By Lemma \ref{lemma1}, for $1\leq j\leq a=s-1$, we have
$$I_j(X)=I_j(\ell_1\cdots\ell_s)=I(V_{s-j+1}(\A)),$$ and therefore

\begin{itemize}
  \item ${\rm ht}(I_1(X))=n+1$,
  \item ${\rm ht}(I_a(X))=2$,
  \item ${\rm ht}(I_j(X))=\min\{s-j+1,n+1\}$, for $j=2,\ldots,a-1=s-2$.
\end{itemize}
Since $$2=s-(s-1)+1=s-a+1 \mbox{ and } s-j+1=(s-1+1-j)(s-(s-1))+1=(a+1-j)(s-a)+1,$$
  we satisfy the conditions of \cite[Theorem 3.7]{BrCoVa}, and therefore, any power of $I_a(X)=I(V_2(\A))$ has linear graded free resolution, hence by Remark \ref{remark0}, the conjecture is true for $c=2$.

\medskip

\noindent\underline{$c=n$}. From Lemma \ref{lemma1} we have $I(V_n(\A))=I_{s-n+1}(\ell_1\cdots\ell_s)$, which also equals $I_{s-n+1}(X)$. So ${\rm ht}(I_{s-n+1}(X))=n$.

Let $1\leq j\leq s-n$. We still have $I_j(X)=I_j(\ell_1\cdots\ell_s)$. Because $\A$ is generic, the linear code $\mathcal C_{\A}$ has minimum distance $d_1(\mathcal C_{\A})=s-(n+1)+1=s-n$. By \cite[Proposition 2.2]{AnGaTo}, for all $1\leq j\leq s-n$, we have ${\rm ht}(I_j(\ell_1\cdots\ell_s))=n+1$. Therefore  the conditions in \cite[Theorem 3.7]{BrCoVa} are satisfied and the conjecture is also true for $c=n$.
\end{rem}

\subsection{Resurgence.} We finish the paper by determining the resurgence of defining ideals of star configurations.

Let $0\neq I \subset R:=\mathbb K[x_0,\ldots,x_n]$ be a homogeneous ideal. The {\it resurgence} of $I$ is defined as
$$
\rho(I):={\rm sup}\left\{\frac{m}{r} \,\big|\, I^{(m)}\nsubseteq I^r\right\}.
$$
The resurgence of ideals is an important invariant describing the containment of symbolic powers and regular powers. We always have $\rho(I)\geq 1$ since if $m<r$, then $I^{(m)}\nsubseteq I^r$. By a result of Ein-Lazarsfeld-Smith \cite{ELS01} and Hochster-Huneke \cite{HH02} that $I^{(Nr)}\subseteq I^r$ for any $r\geq 1$, one can see that $\rho(I)\leq N$.

\medskip

Let $\A=\{H_1, \ldots, H_s\}$ be a set of generic $s\geq n+1$ hyperplanes in $\mathbb{P}^n$ with defining linear forms $\ell_1, \ldots, \ell_s$, i.e., $H_i=V(\ell_i)$ for $1\leq i\leq s$.  Let $N=s-1$ and $\A^{\prime}=\{H^{\prime}_0, H^{\prime}_1, \ldots, H^{\prime}_N\}$ be the set of coordinate hyperplanes in $\mathbb{P}^N$, i.e., $H^{\prime}_i=V(z_i)$, where $z_0, \ldots z_N$ are coordinate variables.

Define $$\phi: \mathbb{K}[z_0, \ldots z_N] \longrightarrow \mathbb{K}[x_0, \ldots, x_n]$$ by $\phi: z_i \longmapsto \ell_{i+1}$ for $0\leq i\leq N$.

Set $I:=I\left(V_c(\A, \mathbb{P}^n)\right)$ (resp. $I^{\prime}:=I\left(V_c(\A^{\prime}, \mathbb{P}^N)\right)$) be the defining ideal of the star configuration of codimension $c$ with support in $\A$ (resp. in $\A^{\prime}$). By \cite[Theorem 3.1]{GeHaMi}, one has that $\phi({I^{\prime}}^{(m)})=I^{(m)}$ for every $m\geq 1$.

In the following, we  first give a positive answer to Question 4.12 in \cite{GeHaMi}, that asks whether $\rho(I)=\rho(I^{\prime})$. Then we prove a result that calculates the resurgence of the defining ideal of any star configuration, which generalizes \cite[Theorem 4.11]{GeHaMi}.

\begin{prop} \label{prop_question} With the above setting, one has $\rho(I)=\rho(I^{\prime})$ for  $1\leq c\leq n$.
\end{prop}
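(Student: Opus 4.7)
The plan is to prove the two inequalities $\rho(I)\leq\rho(I')$ and $\rho(I')\leq\rho(I)$ separately. Since $\A$ is generic with $s\geq n+1$, the linear forms $\ell_1,\ldots,\ell_s$ span $R_1$, so $\phi$ is a surjective graded $\mathbb K$-algebra homomorphism with $\phi(I')=I$. This immediately yields $\phi(I'^r)=I^r$ for every $r\geq 1$: $\phi(I'^r)\subseteq I^r$ is clear, while conversely any generator $f_1\cdots f_r$ of $I^r$ with $f_i\in I$ can be written as $\phi(g_1\cdots g_r)$ for lifts $g_i\in I'$ of the $f_i$'s. Combined with $\phi(I'^{(m)})=I^{(m)}$, any element $f\in I^{(m)}\setminus I^r$ lifts through $\phi$ to some $g\in I'^{(m)}$ which must lie outside $I'^r$ (else $f=\phi(g)\in I^r$). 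So each non-containment witness on the $\A$-side produces one on the $\A'$-side, giving $\rho(I)\leq\rho(I')$.

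For the reverse inequality, the plan is to show $I^{(m)}\subseteq I^r\Rightarrow I'^{(m)}\subseteq I'^r$ by combining the primary decompositions from Theorem~\ref{thm_GHMconj}
\begin{align*}
I^r&=\bigcap_{j=c}^{n} I(V_j(\A))^{((j-c+1)r)}\cap M^{(s-c+1)r},\\
I'^r&=\bigcap_{j=c}^{N} I(V_j(\A'))^{((j-c+1)r)}\cap M'^{(s-c+1)r}
\end{align*}
with the observation that the ``top'' component $M^{(s-c+1)r}$ is the tightest constraint on both sides. Precisely, localizing $I^{(m)}\subseteq I(V_j(\A))^{((j-c+1)r)}$ at a codimension-$j$ prime $\frak q=\langle\ell_{q_1},\ldots,\ell_{q_j}\rangle$ reduces --- by genericity of $\A$, which makes the remaining $\ell_i$ units in $R_\frak q$ --- to the combinatorial inequality $A(j,m)\geq(j-c+1)r$, where
$$A(j,m):=\min\Bigl\{\textstyle\sum_{i=1}^j a_i\;:\;a_i\in\Z_{\geq 0},\;\sum_{i\in\sigma}a_i\geq m\text{ for every }c\text{-subset }\sigma\subseteq[j]\Bigr\}.$$
The same invariant $A(j,m)$ governs the local components on the $\A'$-side, and $\alpha(I^{(m)})=A(s,m)$: one has $\alpha(I'^{(m)})=A(s,m)$ trivially because $I'^{(m)}$ is monomial, and $\alpha(I^{(m)})=\alpha(I'^{(m)})$ because $\phi$ restricted to each degree piece $(I'^{(m)})_d$ maps the generating monomials $z^\alpha$ to the nonzero products $\ell_1^{a_0}\cdots\ell_s^{a_N}$. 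Hence the top condition on both sides is literally $A(s,m)\geq(s-c+1)r$.

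The step I expect to be the main obstacle is the \emph{monotonicity} claim that $A(t,m)/(t-c+1)$ is non-increasing in $t$ for $c\leq t\leq s$: granting it, $A(s,m)\geq(s-c+1)r$ forces $A(j,m)\geq(j-c+1)r$ for every $c\leq j\leq N$, the top condition implies all the others on both sides, and the equivalent characterization $I^{(m)}\subseteq I^r\Longleftrightarrow A(s,m)\geq(s-c+1)r\Longleftrightarrow I'^{(m)}\subseteq I'^r$ follows. I plan to establish the monotonicity from the explicit integer-LP value $A(t,m)=t\lceil m/c\rceil+(m\bmod c)-c$ when $c\nmid m$ and $A(t,m)=tm/c$ otherwise, obtained by examining the symmetric optimal solution where the $a_i$'s take only the two values $\lfloor m/c\rfloor$ and $\lceil m/c\rceil$; a short computation of the discrete forward difference of the ratio reduces the monotonicity claim to the obvious inequality $(c-1)(s-t)\geq 0$. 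Combining this with the $\A\leftrightarrow\A'$ comparison above completes the proof and gives $\rho(I)=\rho(I')$.
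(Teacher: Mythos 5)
Your proposal is correct, but it takes a genuinely different route from the paper's for the harder inequality $\rho(I)\geq\rho(I')$. The paper disposes of $\rho(I)\leq\rho(I')$ by citing the discussion after Question 4.12 of \cite{GeHaMi} (your lifting-of-witnesses argument through $\phi$ is a fine self-contained substitute), and then proves $I^{(m)}\subseteq I^r\Rightarrow {I'}^{(m)}\subseteq {I'}^r$ by a short element-wise transfer: a monomial generator $\xi'=z_0^{t_1}\cdots z_N^{t_s}$ of ${I'}^{(m)}$ maps to $\xi=\ell_1^{t_1}\cdots\ell_s^{t_s}\in I^{(m)}\subseteq I^r=I_{r(s-c+1)}(\ell_1^r\cdots\ell_s^r)$ by Lemma~\ref{lemma1}, membership of $\xi$ produces exponents $k_j\leq\min(t_j,r)$ with $\sum_j k_j=r(s-c+1)$, and the same exponent data applied to $\xi'$ shows $\xi'\in {I'}^r$; no primary decomposition of powers and no numerical invariant is needed. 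You instead run the comparison through Theorem~\ref{thm_GHMconj}: you test $I^{(m)}$ (resp. ${I'}^{(m)}$) against each component of the decomposition of $I^r$ (resp. ${I'}^r$), identify each such condition --- after localizing at a codimension-$j$ component prime and using genericity so that only the primes $\frak p_\sigma$ with $\sigma$ inside the chosen $j$-subset survive --- with the integer-program value $A(j,m)=m+(j-c)\lceil m/c\rceil$ (your closed formula is equivalent to this), and then use monotonicity of $A(j,m)/(j-c+1)$ in $j$, which is immediate since this ratio equals $\lceil m/c\rceil+\bigl(m-\lceil m/c\rceil\bigr)/(j-c+1)$ and $m\geq\lceil m/c\rceil$, so the step you feared would be the obstacle is not one. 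The payoff of your heavier route is the clean criterion $I^{(m)}\subseteq I^r\iff\alpha(I^{(m)})=A(s,m)\geq r(s-c+1)\iff {I'}^{(m)}\subseteq {I'}^r$, which not only gives $\rho(I)=\rho(I')$ but also yields Theorem~\ref{thm_resurgence} directly; the price is that the localization/flat-extension bookkeeping (checking containment in a $\frak q$-primary power inside the polynomial subring generated by the $j$ chosen forms, in the style of Proposition~\ref{prop_saturation}) and the identity $\alpha(I^{(m)})=\alpha({I'}^{(m)})$ via \cite[Theorem 3.1]{GeHaMi} must be written out carefully, whereas the paper's argument avoids all of this machinery.
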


\begin{proof} By the discussion after \cite[Question 4.12]{GeHaMi}, one has that $\rho\left(I\right)\leq \rho\left(I^{\prime}\right)$, so we only need to prove $\rho\left(I\right)\geq \rho\left(I^{\prime}\right)$.

Assume $I^{(m)}\subseteq I^r$. We want to show ${I^{\prime}}^{(m)}\subseteq {I^{\prime}}^r$, which yields the desired inequality.

By \cite[Theorem 3.1]{GeHaMi}, the symbolic power ${I^{\prime}}^{(m)}$ is generated by monomials in $z_i$. Let $\xi^{\prime}\in {I^{\prime}}^{(m)}$ be a monomial generator, then one can write $\xi^{\prime}=z_0^{t_1}z_1^{t_2} \cdots z_N^{t_s}$. Hence $\xi:=\phi(\xi^{\prime})=\ell_1^{t_1}\ell_2^{t_2}\cdots \ell_s^{t_s}\in \phi({I^{\prime}}^{(m)})=I^{(m)}$. Since $I^{(m)}\subseteq I^r=I_{ra}(\ell_1^r\cdots\ell_s^r)$, where $a=s-c+1$ (see Lemma \ref{lemma1}). Hence $$\xi=\ell_1^{t_1}\ell_2^{t_2}\cdots \ell_s^{t_s}=\left(\ell_1^{k_1}\ell_2^{k_2}\cdots \ell_s^{k_s}\right)\left(\ell_1^{t_1-k_1}\ell_2^{t_2-k_2}\cdots \ell_s^{t_s-k_s}\right),$$ where $k_j\leq r, t_j-k_j\geq 0$ for $1\leq j\leq s$, and $\sum_{j=1}^sk_j=ra$.

In the same manner, $$\xi^{\prime}=z_0^{t_1}z_1^{t_2}\cdots z_N^{t_s}=\left(z_0^{k_1}z_1^{k_2}\cdots z_N^{k_s}\right)\left(z_0^{t_1-k_1}z_1^{t_2-k_2}\cdots z_N^{t_s-k_s}\right).$$
Since $k_j\leq r$ for $1\leq j\leq s$ and $\sum_{j=1}^sk_j=ra$, one has $z_0^{k_1}z_1^{k_2}\cdots z_N^{k_s}\in I_{ra}(z_0^r z_1^r \cdots z_N^r)={I^{\prime}}^r$. Therefore $\xi^{\prime}\in {I^{\prime}}^r$.
\end{proof}

\smallskip

\begin{thm}\label{thm_resurgence} $I:=I\left(V_c(\A, \mathbb{P}^n)\right)$ be the defining ideal of the star configuration of codimension $c$ with support in $\A$. Then $$\rho(I)=\frac{c(s-c+1)}{s}.$$
\end{thm}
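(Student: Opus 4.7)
The plan is to reduce the general computation to the coordinate hyperplane case via the preceding proposition, and then invoke the value of the resurgence in that case from \cite[Theorem 4.11]{GeHaMi}.

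First I would apply Proposition \ref{prop_question} to obtain
\[
\rho(I) = \rho(I'),
\]
where $I' = I(V_c(\A', \mathbb{P}^{s-1}))$ and $\A'$ is the arrangement of the $s$ coordinate hyperplanes $H'_i = V(z_i)$ in $\mathbb{P}^{s-1}$. Since the $s$ coordinate linear forms $z_0, \ldots, z_{s-1}$ are linearly independent, $\A'$ is generic in $\mathbb{P}^{s-1}$; moreover, we are exactly in the regime where the number of hyperplanes equals the ambient dimension plus one, that is, the case $n' = s - 1$. In this reduced setting $I' = I_{s-c+1}(z_0 \cdots z_{s-1})$ by Lemma \ref{lemma1}, so $I'$ is a squarefree monomial ideal.

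Then I would invoke \cite[Theorem 4.11]{GeHaMi}, which handles precisely this $n = s-1$ generic (i.e., monomial) situation and supplies
\[
\rho(I') = \frac{c(s-c+1)}{s}.
\]
Chaining the two displayed equalities gives $\rho(I) = \frac{c(s-c+1)}{s}$, which is the content of the theorem.

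There is no genuine obstacle remaining at this stage. The substantive work has already been carried out: the identification of ordinary powers of $I(V_c(\A))$ with ideals of fold products (Lemma \ref{lemma1}), the linearity of the graded free resolution (Theorem \ref{thm_linear}) which yields the primary decomposition of these powers, and the nontrivial inequality $\rho(I) \geq \rho(I')$ established in Proposition \ref{prop_question} via the monomial description of ${I'}^{(m)}$. The present theorem is essentially the assembly of these ingredients with the coordinate-case formula of Geramita-Harbourne-Migliore, which is exactly why the authors describe it as a generalization of \cite[Theorem 4.11]{GeHaMi}: the formula persists on the nose, but the restriction $n = s-1$ is removed.
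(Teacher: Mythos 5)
Your proof is correct in substance and its first step is exactly the paper's: Proposition \ref{prop_question} reduces everything to the coordinate-hyperplane star configuration $I'=I_{s-c+1}(z_0\cdots z_{s-1})$ in $\mathbb{P}^{s-1}$. Where you diverge is in how the reduced case is handled: you take the value $\rho(I')=c(s-c+1)/s$ wholesale from \cite[Theorem 4.11]{GeHaMi} (the $s=n+1$ case that the present theorem generalizes), whereas the paper imports from the discussion after \cite[Definition 4.10]{GeHaMi} (resting on \cite{BoHa}) only the lower bound $\rho(I')\geq c(N-c+2)/(N+1)$, with $N=s-1$, and then proves the reverse containment itself: if $m/r\geq c(N-c+2)/(N+1)$ and $\xi=z_0^{t_0}\cdots z_N^{t_N}\in {I'}^{(m)}$, then every $c$-subset of the exponents sums to at least $m\geq cr\left(1-\tfrac{c-1}{N+1}\right)$; averaging these inequalities over the $c$-subsets of any $c'$-subset gives $t_{i_1}+\cdots+t_{i_{c'}}\geq c'r\left(1-\tfrac{c-1}{N+1}\right)\geq r(c'-c+1)$, which by Lemma \ref{lemma1} and Corollary \ref{cor_decomposition} (so ultimately Theorem \ref{thm_linear}), i.e. ${I'}^r=I_{r(N-c+2)}(z_0^r\cdots z_N^r)=\bigcap\langle z_{i_1},\ldots,z_{i_{c'}}\rangle^{r(c'-c+1)}$, yields $\xi\in {I'}^r$. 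Your route is shorter and perfectly legitimate provided \cite[Theorem 4.11]{GeHaMi} indeed gives the exact resurgence of the coordinate case for \emph{every} codimension $1\leq c\leq n$ and not just a bound or a special value of $c$; you should verify its precise scope, since the paper pointedly does not invoke it, citing Bocci--Harbourne only for the lower bound. If the scope were narrower than you assume, the upper-bound containment ${I'}^{(m)}\subseteq {I'}^r$ in the monomial case would be the missing step, and it is exactly the averaging computation sketched above; the paper's choice to carry it out keeps the argument self-contained and shows its new primary decomposition doing the work.
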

\begin{proof} By Proposition \ref{prop_question}, setting $s=N+1$, it is enough to show $\displaystyle \rho(I^{\prime})=\frac{c(N-c+2)}{N+1}$, where $I^{\prime}:=I\left(V_c(\A^{\prime}, \mathbb{P}^N)\right)$ is the defining ideal of the star configuration of codimension $c$ with support in $\A^{\prime}=\{V(z_0),\ldots,V(z_N)\}$, the coordinate hyperplane arrangement.

As mentioned after \cite[Definition 4.10]{GeHaMi}, citing \cite{BoHa}, we have that $\displaystyle \rho(I^{\prime})\geq\frac{c(N-c+2)}{N+1}$, so we have to show the other inequality; i.e., if $m/r\geq c(N-c+2)/(N+1)$, then ${I^{\prime}}^{(m)}\subseteq {I^{\prime}}^r$.

Let $\xi=z_0^{t_0}\cdots z_N^{t_N}\in {I^{\prime}}^{(m)}$ be an arbitrary monomial generator. Then for every $0\leq i_1<\cdots<i_c\leq N$,
$\xi\in\langle z_{i_1},\ldots,z_{i_c}\rangle^m.$ So for every $0\leq i_1<\cdots<i_c\leq N$, we have
$$t_{i_1}+\cdots+t_{i_c}\geq m \geq cr\left(1-\frac{c-1}{N+1}\right).$$

By Lemma \ref{lemma1} and Corollary \ref{cor_decomposition}, one has
$$
{I^{\prime}}^r=I_{r(N-c+2)}(z_0^r\cdots z_N^r)=\bigcap_{c^{\prime}=c}^{N+1}\left(\bigcap_{0\leq i_1<\cdots<i_{c^{\prime}}\leq N}\langle z_{i_1},\ldots, z_{i_{c^{\prime}}}\rangle^{r(c^{\prime}-c+1)}\right),
$$ so we need to show $\xi\in \langle z_{i_1},\ldots, z_{i_{c^{\prime}}}\rangle^{r(c^{\prime}-c+1)}$, for every $0\leq i_1<\cdots<i_{c'}\leq N$, where $c\leq c'\leq N+1$.

Let $0\leq i_1<\cdots<i_{c'}\leq N$, with $c\leq c'\leq N+1$. From above, for any $U\subset\{i_1,\ldots,i_{c'}\}$ with $|U|=c$ we have $$\sum_{u\in U}t_u\geq cr\left(1-\frac{c-1}{N+1}\right).$$ Summing up all of these inequalities over all such subsets $U$ we have

$${{c'-1}\choose{c-1}}(t_{i_1}+\cdots+t_{i_{c'}})\geq {{c'}\choose{c}}cr\left(1-\frac{c-1}{N+1}\right),$$ leading to
$$t_{i_1}+\cdots+t_{i_{c'}}\geq c'r\left(1-\frac{c-1}{N+1}\right).$$ But the last quantity is $\geq r(c'-c+1)$, since $c'\leq N+1$, concluding the proof.
\end{proof}

\bigskip

\noindent
{\bf Acknowledgment.} \, The authors would like to thank Kuei-Nuan Lin and Yi-Huang Shen for careful reading of an earlier version of this manuscript where we claimed we are proving Conjecture \ref{Conjecture2} in its full generality. Their corrections made us restrict the hypotheses to collections of linear forms with generic support; consequently, that conjecture is still open in the general case.

\bigskip

\renewcommand{\baselinestretch}{1.0}
\small\normalsize 

\bibliographystyle{amsalpha}

\end{document}